\documentclass[11pt]{article}
\usepackage{geometry}
\geometry{verbose,tmargin=3.5cm,bmargin=3.5cm,lmargin=3.5cm,rmargin=3.5cm}
\usepackage{color}
\usepackage{float}
\usepackage{textcomp}
\usepackage{amsthm}
\usepackage{amsmath}
\usepackage{amssymb}

\usepackage{changes}

\usepackage{mathtools}

%

\marginparwidth 0pt
\oddsidemargin  0pt
\evensidemargin  0pt
\marginparsep 0pt

\topmargin   -.5in
\hoffset -0.1in
\textwidth   6.6in
\textheight  8.6 in

\newtheorem{theorem}{Theorem}[section]
\newtheorem{lemma}[theorem]{Lemma}

\newtheorem{proposition}[theorem]{Proposition}

\newcommand{\R}{\mathbb{R}}
\newcommand{\BR}{ {\overline{\mathbb{R}} }  }
\newcommand{\Dom}{\mathrm{Dom}}

\newcommand{\inner}[2]{\langle{#1},{#2}\rangle}

\newcommand{\norm}[1]{\|#1\|}

\newcommand{\tos}{\rightrightarrows} 

\newcommand{\bi}{\begin{itemize}}
\newcommand{\ei}{\end{itemize}}
\newcommand{\ba}{\begin{array}}
\newcommand{\ea}{\end{array}}

\newcommand{\mgap}{\vspace{.1in}}
\newcommand{\vgap}{\vspace{.1in}}


\begin{document}

\title{Regularized HPE-type methods for solving monotone inclusions\\  with improved pointwise  iteration-complexity bounds}
\author{
M. Marques Alves
\thanks{
Departamento de Matem\'atica,
Universidade Federal de Santa Catarina,
Florian\'opolis, Brazil, 88040-900 
({\tt maicon.alves@ufsc.br}).
This work was
done while this author was a postdoc at the 
School of Industrial and Systems Engineering, 
Georgia Institute of Technology, Atlanta, GA, 30332-0205.
The work of this author was partially 
supported by CNPq grants no.
406250/2013-8, 237068/2013-3 and
306317/2014-1
}
\and
R.D.C. Monteiro
    \thanks{School of Industrial and Systems
    Engineering, Georgia Institute of
    Technology, Atlanta, GA, 30332-0205.
    (email: {\tt monteiro@isye.gatech.edu}). The work of this author
    was partially supported by NSF Grants CCF-0808863 and CMMI-0900094
    and ONR Grant ONR N00014-11-1-0062.}
  \and
    B. F. Svaiter\thanks{ IMPA, Estrada Dona Castorina 110, 22460-320 Rio de
    Janeiro, Brazil ({\tt benar@impa.br}). 
The work of this author was partially supported by CNPq grants no.
302962/2011-5, 
474996/2013-1, 
and FAPERJ grants
E-26/102.940/2011 and
201.584/2014 (Cientista de Nosso Estado).}
}

\maketitle

\begin{abstract}
 This paper studies 
 the iteration-complexity of new  
 regularized hybrid proximal extragradient 
 (HPE)-type  methods for solving monotone 
 inclusion problems (MIPs). 
 The new (regularized HPE-type) methods essentially consist of instances of the standard  HPE method
 applied to regularizations of the original MIP.
 It is shown that its pointwise iteration-complexity
 considerably improves the one of the HPE method
 while approaches (up to a logarithmic factor) the
 ergodic iteration-complexity of the latter method.
 \\
  \\
  2000 Mathematics Subject Classification: 
  47H05, 47J20, 90C060, 90C33,
  65K10.
  \\
  \\
  Key words: proximal point methods, 
 hybrid proximal extragradient
 method, pointwise iteration-complexity, 
 ergodic iteration-complexity, Tseng's MFBS method,
 Korpelevich's extragradient method.
 \end{abstract}

\pagestyle{plain}


\section{Introduction}
\label{sec:int}
We consider the \emph{monotone inclusion problem} (MIP)
of finding $x$ such that 
\begin{align}
 \label{eq:mip}
  0\in B(x)
\end{align}
where $B$ is a point-to-set maximal monotone operator. 
One of the most important schemes for solving
MIPs is the proximal point method (PPM),
proposed by Martinet \cite{MR0298899} and further developed
by Rockafellar \cite{roc-mon.sjco76}.
It is an iterative scheme which, in its exact version,
generates a sequence $\{x_k\}$ according to
$x_{k}=(I+\lambda_k B)^{-1}x_{k-1}$ 
(where $\lambda_k>0$ is a regularization
parameter), or equivalently, $x_k$ as the
unique solution of the MIP: 
$0\in \lambda_k B(x)+x-x_{k-1}$. 
Among other results, Rockafellar \cite{roc-mon.sjco76}
proposed inexact versions of the PPM based on a
summable absolute error criterion and subsequently 
Solodov and Svaiter 
\cite{sol.sva-hpe.sva99, sol.sva-hyb.jca99} proposed new inexact variants
based on a hybrid proximal extragradient (HPE) relative error criterion. 
In each step, the variants proposed and studied in 
\cite{sol.sva-hpe.sva99}, namely the
HPE method,
computes $\lambda=\lambda_k>0$ and a triple
$(y,b,\varepsilon)=(y_k,b_k,\varepsilon_k)$  
satisfying
\begin{align}
\label{eq:ec3}
 b\in B^{[\varepsilon]}(y),\qquad
  \norm{\lambda b+y-x}^2+
  2\lambda\varepsilon
    \leq \sigma^2\norm{y-x}^2,
\end{align}
where $x=x_{k-1}$ is the current iterate,
$\sigma\in [0,1)$ is a relative error tolerance
and $B^{[\varepsilon]}$ denotes the
$\varepsilon$-enlargement 
\cite{bur.ius.sva-enl.svva97} of $B$. Moreover, 
instead of choosing $y$ as the next iterate, the HPE method
computes $x_+=x_k$ by means of an
extragradient step $x_+=x-\lambda b$.

The iteration-complexity of the HPE method
was established in \cite{mon.sva-hpe.siam10} with regards to the following
termination criterion in terms of precisions $\bar\rho>0$ and $\bar\varepsilon>0$:
find a triple $(y,b,\varepsilon)$ such that
\begin{equation}
\label{eq:appsol.c}
 b\in B^{[\varepsilon]}(y),\quad 
\norm{b}\leq \bar\rho,\quad  \varepsilon\leq \bar\varepsilon.
\end{equation}
Assuming that the sequence of stepsizes
$\{\lambda_k\}$ in the HPE method 
is bounded below by some constant 
$\underline{\lambda}>0$, the pointwise   
iteration-complexity result of 
\cite{mon.sva-hpe.siam10} guarantees that
the most recent triple $(y,b,\varepsilon)$
satisfying \eqref{eq:ec3} will eventually 
satisfy the termination criterion given in 
\eqref{eq:appsol.c}
in at most 
$\mathcal{O}\left(\max\left\{
d_0^2/\underline{\lambda}^2\bar\rho^2,
d_0^2/\underline{\lambda}\bar\varepsilon\right\}
\right)$ iterations where $d_0$ denotes the distance
of the initial iterate $x_0$ to the solution set 
of \eqref{eq:mip}. 
Moreover, under the same condition on the sequence
of stepsizes $\{\lambda_k\}$, an ergodic 
iteration-complexity result of 
\cite{mon.sva-hpe.siam10} shows that an ergodic triple constructed from all previous generated triples
satisfying \eqref{eq:ec3} will eventually 
satisfy \eqref{eq:appsol.c} in at most 
$\mathcal{O}\left(\max\left\{
d_0/\underline{\lambda}\bar\rho,
d_0^2/\underline{\lambda}\bar\varepsilon\right\}
\right)$ iterations.
Clearly, the ergodic iteration-complexity is better 
than the pointwise one by
a factor of $\mathcal{O}\left
(\max\{1,d_0/\underline{\lambda} \bar\rho\}\right)$.

Our main goal in this paper is to present 
regularized HPE-type methods for 
solving \eqref{eq:mip}
which essentially consists of instances of the 
HPE method applied to the regularized
MIP
\begin{align}
 \label{eq:mip.mu}
0 \in B(x) + \mu (x-x_0)
\end{align}
where $\mu>0$ and $x_0$ is an initial point.
In particular, it is shown that
a certain version of the regularized HPE method which
dynamically adjusts $\mu>0$ solves 
\eqref{eq:mip} in at most
\begin{align}
 \mathcal{O}\left(\left(
\dfrac{d_0}
{\underline{\lambda}\bar \rho}
+1\right)
\left[1+
\max\left\{\log^+\left(
\dfrac{d_0}
{\underline{\lambda}\bar \rho}\right),
\log^+\left( \frac{d_0} {\underline{\lambda}\bar \varepsilon}\right)
\right\}
\right]\right)
\end{align}
iterations. This pointwise iteration-complexity 
bound  considerably improves the one for the usual HPE method.
Also, note that it differs 
from the ergodic one for the
usual HPE method 
by only a  logarithmic factor.
 Finally, we discuss specific instances of the regularized HPE method
which are based on  Tseng's modified forward-backward splitting (MFBS) 
method \cite{MR1741147}
and Korpelevich's extragradient method 
\cite{MR0451121}.

\vspace{0.2cm}
\noindent
{\bf Previous most related works.} 
In the context of variational inequalities (VIs), 
Nemirovski \cite{Nem05-1} has established the 
ergodic iteration-complexity of an extension of 
Korpelevich's method, namely, 
the mirror-prox algorithm, under
the assumption that the feasible set 
of the problem is bounded.
Nesterov \cite{Nesterov-ext} 
proposed a new dual 
extrapolation algorithm for solving VIs whose
termination depends on the 
guess of a ball centered at the initial iterate.
Applications 
of the HPE method to the 
iteration-complexity analysis of 
several zeroth-order (or, in the context of
optimization, first-order) 
methods for solving monotone VIs, MIPs 
and saddle-point problems were
discussed by Monteiro and Svaiter 
in \cite{mon.sva-hpe.siam10} 
and in the subsequent papers 
\cite{mon.sva-com.siam10,mon.sva-ite.siam13}.
The HPE method was also used to study the 
iteration-complexities of first-order (or, in the
context of optimization, second-order) 
methods for solving either a 
monotone nonlinear equation (see
Section 7 of \cite{mon.sva-hpe.siam10}) 
and, more generally, a monotone VI 
(see \cite{mon.sva-ite.siam12}).

\vspace{0.2cm}
\noindent
{\bf Organization of the paper.} 
Section \ref{sec:pre} contains two subsections.
Subsection \ref{sec:bas} presents the notation
as well as some basic concepts about convexity
and maximal monotone operators.
Subsection \ref{sec:smhpe} is devoted to the study of
a specialization of the HPE method
for solving inclusions whose underlying
operator is written as a sum of a (maximal)
monotone and a strongly monotone operator.
Section \ref{sec:imp.hpe}  presents 
the main contributions
of the paper, namely, the presentation of two new regularized HPE
methods (a static one and a dynamic one) as well as its complexity analysis.
Section \ref{sec:app} discusses two specific instances of the dynamic regularized HPE
method of Section \ref{sec:imp.hpe} based on
Tseng's MFBS method and Korpelevich's extragradient
method. Finally, the appendix
presents the proofs of some results in Subsection 
\ref{sec:smhpe}.


\section{Preliminaries}
\label{sec:pre}

This section discusses some preliminary results
which will be used throughout the paper.
Subsection \ref{sec:bas} presents the
general notation and some basic concepts about
convexity, maximal monotone operators, and
related issues. 
Subsection \ref{sec:smhpe} describes a 
special version of the HPE method 
introduced in \cite{sol.sva-hpe.sva99}
for solving monotone inclusions whose 
underlying operators consist of the sum of a (maximal) 
monotone and a strongly
(maximal) monotone operator.

\subsection{Basic concepts and notation}
\label{sec:bas}
For $t>0$, we let $\log^+(t):=\max\{\log(t),0\}$.
Let also $X$ be a finite-dimensional 
real vector space with
inner product $\inner{\cdot}{\cdot}$ and induced norm 
$\|\cdot\|:=\sqrt{\inner{\cdot}{\cdot}}$.
Given a set-valued operator $A:X\tos X$, 
its \emph{graph} and
\emph{domain} are, respectively, 
$\mbox{Gr}(A):=\{(x,v)\in X\times X\,:\, v\in A(x)\}$
and $\mbox{Dom}(A):=\{x\in X\,:\, A(x)\neq \emptyset\}$.
The \emph{inverse} of $A:X\tos X$ is 
$A^{-1}:X\tos X$, 
$A^{-1}(v):=\{x\;:\; v\in A(x)\}$.
The \emph{sum} of two set-valued 
operators $A, B:X\tos X$ is defined by
$
 A+B:X\tos X,\;\;(A+B)(x):=
\{a+b\in X\,:\,a\in A(x),\; b\in B(x)\}.
$

An operator $A:X\tos X$ is \emph{$\mu$-strongly
monotone} if $\mu\geq 0$ and
\begin{align}
\label{eq:sm.c}
 \inner{v-v'}{x-x'}\geq \mu\norm{x-x'}^2\quad \forall (x,v),(x',v')\in \mbox{Gr}(A).
\end{align}
If $\mu=0$ in the above inequality, then $A$ is said to be a
\emph{monotone} operator.
Moreover, $A:X\tos X$ is \emph{maximal monotone} if it is monotone and maximal in the following sense:
if $B:X\tos X$ is monotone and $\mbox{Gr}(A)\subset \mbox{Gr}(B)$, then $A=B$.
The \emph{resolvent} of a maximal monotone operator $A:X\tos X$
with parameter $\lambda>0$ is $(I+\lambda A)^{-1}$. It follows directly
from this definition that $y=(I+\lambda A)^{-1} x$ 
if and only if $(x-y)/\lambda\in A(y)$.
It is easy to see that if $A:X\tos X$ is $\mu$-strongly monotone and $B:X\tos X$ is monotone,
then the sum $A+B$ is also $\mu$-strongly monotone. In particular, the sum of two monotone
operators is also a monotone operator.

The \emph{$\varepsilon$-enlargement} 
\cite{bur.ius.sva-enl.svva97} of a 
maximal monotone operator $B:X\tos X$ is defined
by $B^{[\varepsilon]}:X\tos X$,
\begin{align}
\label{eq:def.eps}
 B^{[\varepsilon]}(x)
 :=\{v\in X\,:\,\inner{v-v'}{x-x'}\geq -\varepsilon,\;\;\forall (x',v')\in \mbox{Gr}(B)\}.
\end{align}

The following summarizes some useful properties of $B^{[\varepsilon]}$.

\begin{proposition}
\label{pr:teps.pr}
Let $A, B:X\tos X$ be maximal monotone operators.  Then,
\begin{itemize}
\item[\emph{(a)}] if $\varepsilon_1\leq \varepsilon_2$, then
$A^{[\varepsilon_1]}(x)\subseteq A^{[\varepsilon_2]}(x)$ for every $x \in X$;
\item[\emph{(b)}] $A^{[\varepsilon']}(x)+(B)^{[\varepsilon]}(x) \subseteq
(A+B)^{[\varepsilon'+\varepsilon]}(x)$ for every $x \in X$ and
$\varepsilon, \varepsilon'\geq 0$;
\item[\emph{(c)}] $A$ is monotone if, and only if, $A  \subseteq A^{[0]}$;
\item[\emph{(d)}] $A$ is maximal monotone if, and only if, $A = A^{[0]}$;
\end{itemize}
\end{proposition}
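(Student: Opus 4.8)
The plan is to prove the four parts of Proposition \ref{pr:teps.pr} by working directly from the defining inequality \eqref{eq:def.eps} of the $\varepsilon$-enlargement, since each statement reduces to elementary manipulations of that inequality.

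\medskip

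\noindent
\textbf{Part (a).} This is the most immediate. Fix $x\in X$ and take $v\in A^{[\varepsilon_1]}(x)$. By definition, $\inner{v-v'}{x-x'}\geq -\varepsilon_1$ for all $(x',v')\in\mbox{Gr}(A)$. Since $\varepsilon_1\leq\varepsilon_2$ we have $-\varepsilon_1\geq-\varepsilon_2$, so the same inequality yields $\inner{v-v'}{x-x'}\geq-\varepsilon_2$, whence $v\in A^{[\varepsilon_2]}(x)$. The monotonicity of the enlargement in $\varepsilon$ is therefore a one-line consequence of the definition.

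\medskip

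\noindent
\textbf{Part (b).} Fix $x\in X$, take $a\in A^{[\varepsilon']}(x)$ and $b\in B^{[\varepsilon]}(x)$; I want to show $a+b\in(A+B)^{[\varepsilon'+\varepsilon]}(x)$. Let $(x',w')\in\mbox{Gr}(A+B)$ be arbitrary, so that $w'=a'+b'$ for some $a'\in A(x')$ and $b'\in B(x')$, i.e.\ $(x',a')\in\mbox{Gr}(A)$ and $(x',b')\in\mbox{Gr}(B)$. Then
\begin{align*}
\inner{(a+b)-w'}{x-x'}
&=\inner{a-a'}{x-x'}+\inner{b-b'}{x-x'}\\
&\geq -\varepsilon'-\varepsilon,
\end{align*}
where the two estimates come from applying the defining inequalities of $A^{[\varepsilon']}(x)$ and $B^{[\varepsilon]}(x)$ to the pairs $(x',a')\in\mbox{Gr}(A)$ and $(x',b')\in\mbox{Gr}(B)$, respectively. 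Since $(x',w')$ was arbitrary, $a+b\in(A+B)^{[\varepsilon'+\varepsilon]}(x)$, which is the desired inclusion. The only point requiring care here is the decomposition $w'=a'+b'$, so that each summand can be tested against the correct operator; this is exactly where the additive splitting of the error budget comes from.

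\medskip

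\noindent
\textbf{Parts (c) and (d).} For (c), suppose first that $A$ is monotone and take $(x,v)\in\mbox{Gr}(A)$, i.e.\ $v\in A(x)$. For any $(x',v')\in\mbox{Gr}(A)$, monotonicity \eqref{eq:sm.c} with $\mu=0$ gives $\inner{v-v'}{x-x'}\geq 0\geq -0$, so $v\in A^{[0]}(x)$ and thus $A\subseteq A^{[0]}$. Conversely, if $A\subseteq A^{[0]}$, then for any $(x,v),(x',v')\in\mbox{Gr}(A)$ the inclusion $v\in A^{[0]}(x)$ tested against $(x',v')$ yields $\inner{v-v'}{x-x'}\geq 0$, which is precisely monotonicity. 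For (d), the inclusion $A\subseteq A^{[0]}$ already holds whenever $A$ is monotone by (c), so what remains is the reverse inclusion $A^{[0]}\subseteq A$ under maximality. If $A$ is maximal monotone, I would argue that $A^{[0]}$ is itself monotone with $\mbox{Gr}(A)\subseteq\mbox{Gr}(A^{[0]})$, and then invoke the maximality of $A$ from the definition to force $A=A^{[0]}$. The one step that is not purely formal is verifying that $A^{[0]}$ is monotone (so that maximality of $A$ can be applied); this follows by taking two points $v\in A^{[0]}(x)$ and $v'\in A^{[0]}(x')$ and adding the two defining inequalities (using each point as a ``test point'' for the other against graph points of $A$), together with a density/approximation argument or the standard fact that $A^{[0]}=A$ for maximal monotone operators established in \cite{bur.ius.sva-enl.svva97}. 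Conversely, if $A=A^{[0]}$ then $A$ is monotone by (c), and maximality follows because any monotone $B\supseteq A$ satisfies $B\subseteq B^{[0]}\subseteq A^{[0]}=A$ by (c) and part (a)-type monotonicity of the enlargement under graph inclusion. I expect part (d) to be the main obstacle, precisely because the reverse inclusion $A^{[0]}\subseteq A$ genuinely uses maximal monotonicity rather than bare monotonicity and cannot be obtained by direct inequality-chasing alone.
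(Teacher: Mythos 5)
The paper itself offers no proof of Proposition \ref{pr:teps.pr}; it is stated as a summary of known properties of the $\varepsilon$-enlargement with reference to \cite{bur.ius.sva-enl.svva97}, so your proposal stands on its own. Parts (a), (b) and (c) are correct and complete: they are exactly the elementary manipulations of \eqref{eq:def.eps} that the statements require, including the key decomposition $w'=a'+b'$ in (b). The converse direction of (d) is also correct: from $\mbox{Gr}(A)\subseteq\mbox{Gr}(B)$ one gets $B^{[0]}\subseteq A^{[0]}$ (a larger graph means more test points, hence a smaller enlargement), so $B\subseteq B^{[0]}\subseteq A^{[0]}=A$ forces $B=A$; that is a clean way to get maximality.

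The genuine gap is in the forward direction of (d), namely your plan to show that $A^{[0]}$ is monotone ``by adding the two defining inequalities, using each point as a test point for the other.'' This step fails: if $v\in A^{[0]}(x)$ and $v'\in A^{[0]}(x')$, the definition \eqref{eq:def.eps} only allows testing $v$ against pairs in $\mbox{Gr}(A)$, and $(x',v')$ need not lie in $\mbox{Gr}(A)$ --- that is precisely what is in question. In fact the claim is false for merely monotone operators: in $X=\R$, take $\mbox{Gr}(A)=\{(0,0)\}$; then $\mbox{Gr}(A^{[0]})=\{(x,v)\,:\,xv\geq 0\}$ contains both $(0,1)$ and $(1,0)$, yet $\inner{1-0}{0-1}=-1<0$, so $A^{[0]}$ is not monotone. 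For maximal monotone $A$ the monotonicity of $A^{[0]}$ is true, but only as a consequence of $A^{[0]}=A$, so your route is circular --- as is the fallback you mention of quoting $A^{[0]}=A$ from \cite{bur.ius.sva-enl.svva97}, since that identity is the very statement being proved. The standard repair is a one-point extension: given $v\in A^{[0]}(x)$, let $\tilde A$ be the operator with $\mbox{Gr}(\tilde A)=\mbox{Gr}(A)\cup\{(x,v)\}$. Then $\tilde A$ is monotone (pairs inside $\mbox{Gr}(A)$ by monotonicity of $A$; the pair $(x,v)$ against any $(x',v')\in\mbox{Gr}(A)$ by the definition of $A^{[0]}$; $(x,v)$ against itself trivially), and $\mbox{Gr}(A)\subseteq\mbox{Gr}(\tilde A)$, so the maximality of $A$ gives $\tilde A=A$, i.e. $v\in A(x)$. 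This proves $A^{[0]}\subseteq A$, and together with part (c) it completes (d).
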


Recall that the 
\emph{$\varepsilon$-subdifferential} of a 
proper closed convex function $f:X\to \BR$
is defined at $x\in X$ by
$\partial_{\varepsilon}f(x):=\{v\in X\,:\,f(x')\geq f(x)+\inner{v}{x'-x}-\varepsilon\;\;\forall x'\in X\}$.
When $\varepsilon=0$, then $\partial f_0(x)$ 
is denoted by $\partial f(x)$
and is called the \emph{subdifferential} of $f$ at $x$.
The simplest example of subdifferential is 
given by considering indicator functions
of closed convex sets. Given a closed convex set 
$\mathcal{X}\subset X$ its \emph{indicator function}
is denoted by $\delta_{\mathcal{X}}$ and is defined by
$\delta_{\mathcal{X}}(x):=0$ if $x\in \mathcal{X}$
and $\delta_{\mathcal{X}}(x):=\infty$ otherwise.
The \emph{normal cone} 
of $\mathcal{X}$
is defined by $N_\mathcal{X}:=
\partial \delta_{\mathcal{X}}$. 
We also define the \emph{projection} on $\mathcal{X}$ by 
$P_\mathcal{X}:=(I+N_{\mathcal{X}})^{-1}$.

\subsection
{Solving inclusions with strongly monotone operators}
\label{sec:smhpe}

In this subsection, we 
consider the MIP
\begin{align}
\label{eq:inc.p}
 0\in A(x)+B(x)
\end{align}
where the following assumptions hold:
\begin{itemize}
\item[A.1)] $A:X\tos X$ is a 
$\mu$-strongly maximal monotone operator 
for some $\mu>0$ (see \eqref{eq:sm.c}); 
\item[A.2)] $B:X\tos X$ is maximal monotone;
\item[A.3)] the solution set of~\eqref{eq:inc.p}, i.e., $(A+B)^{-1}(0)$, is nonempty.
\end{itemize}

We next state a specialized  HPE method  for solving \eqref{eq:inc.p} under the  assumptions stated above.
It will be used later on in Section \ref{sec:imp.hpe} 
to describe regularized HPE methods 
for general MIPs whose
pointwise iteration-complexities improve 
the ones for the usual HPE method 
(see \cite{mon.sva-hpe.siam10}).

%

\mgap
\mgap

\noindent
\fbox{
\begin{minipage}[h]{6.6 in}
{\bf Algorithm 1:} {A specialized HPE method for solving strongly MIPs}
\begin{itemize}
\item[(0)] Let $x_0\in X$ and $\sigma\in [0,1)$ be 
given
and set $k=1$;
\item[(1)] choose $\lambda_k>0$ and find $y_k,v_k\in X$, $\sigma_k\in[0,\sigma]$, and $\varepsilon_k\geq 0$ 
       such that
       \begin{equation}\label{eq:ec}
        v_k\in A(y_k)+B^{[\varepsilon_k]}(y_k)\,,\;\;\|\lambda_kv_k+y_k-x_{k-1}\|^2+2\lambda_k\varepsilon_k\leq 
        \sigma_k^2\|y_k-x_{k-1}\|^2;
       \end{equation}
\item[(2)] set 
          \begin{equation}
 \label{eq:es}
 x_k=x_{k-1}-\lambda_kv_k,
          \end{equation}
          let $k\leftarrow k+1$ and go to step 1.
\end{itemize}
\noindent
{\bf end}
\end{minipage}
}

\vgap
\vgap

We now make some remarks about Algorithm 1.
First, it can be easily checked that if $\sigma=0$
then Algorithm 1 reduces to the exact proximal point method
(PPM) for solving \eqref{eq:inc.p}, i.e.,
\begin{align*}
  x_k=(\lambda_k(A+B)+I)^{-1}x_{k-1}\qquad 
\forall k\geq 1.
\end{align*}
Second, since $A(y)+B^{[\varepsilon]}(y)\subset  (A+B)^{[\varepsilon]}(y)$ for every $y$ in view of Proposition \ref{pr:teps.pr}(b),
it follows that Algorithm 1 is a special instance
of the HPE method studied in 
\cite{mon.sva-hpe.siam10}. 
Third, like in the HPE method, step 1 of Algorithm
1 does not specify how to compute the stepsize
$\lambda_k$ and the triple 
$(y_k,v_k,\varepsilon_k)$. Their computation 
will depend on the
instance of the method under consideration.

The next result derives convergence rates
for the sequences $\{v_k\}$ and $\{\varepsilon_k\}$ generated by Algorithm~1 under the assumption
that the sequence of stepsizes $\{\lambda_k\}$
is bounded away from zero.
Its proof is given  in Appendix \ref{sec:app01}.

\begin{proposition} \label{pr:mmm}
Let 
$d_0$ denote the distance
of 
$x_0$ 
to the solution set of~\eqref{eq:inc.p} and
define
\begin{align}
\label{eq:def.theta}
 \theta :=  \left( \frac{1}{2\underline{\lambda}\mu} +
 \frac{1}{1-\sigma^2} \right)^{-1} \in (0,1).
\end{align}
Assume that  $\lambda_k\geq \underline{\lambda}>0$ 
for every $k\geq 1$.
Then, for every $k\geq 1$, $v_k\in A(y_k)+B^{[\varepsilon_k]}(y_k)$,
\[
 \|v_k\| \le \sqrt{\dfrac{1+\sigma}{1-\sigma}}
 \left(\frac{(1-\theta)^{(k-1)/2}}{\underline{\lambda}} \right) d_0,
   \qquad
 \varepsilon_k \le \frac{\sigma^2}{2(1-\sigma^2)}
 \left(\frac{(1-\theta)^{k-1}}{\underline{\lambda}} \right) d_0^2.
\]
\[
 \|x^*-x_k\| \le (1-\theta)^{k/2} \|x^*-x_0\|
\quad \forall x^* \in (A+B)^{-1}(0).
\]
\end{proposition}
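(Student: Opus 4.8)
The plan is to reduce everything to a single \emph{one-step linear contraction}
\[
\normq{x_k-x^*}\le (1-\theta)\,\normq{x_{k-1}-x^*}
\]
valid for \emph{every} solution $x^*\in(A+B)^{-1}(0)$. Granting this, the third estimate is immediate by induction, after which I take $x^*$ to be the point of $(A+B)^{-1}(0)$ nearest to $x_0$ so that $\norm{x^*-x_0}=d_0$; and the bounds on $\norm{v_k}$ and $\varepsilon_k$ will fall out of the relative error criterion \eqref{eq:ec} once $\norm{y_k-x_{k-1}}$ is controlled by $\norm{x_{k-1}-x^*}$.

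First I would record the consequence of strong monotonicity in enlarged form. Writing $v_k=a_k+b_k$ with $a_k\in A(y_k)$ and $b_k\in B^{[\varepsilon_k]}(y_k)$, and picking $a^*\in A(x^*)$, $b^*\in B(x^*)$ with $a^*+b^*=0$, the $\mu$-strong monotonicity of $A$ gives $\inner{a_k-a^*}{y_k-x^*}\ge\mu\normq{y_k-x^*}$, while the defining inequality \eqref{eq:def.eps} of $B^{[\varepsilon_k]}$ applied to $(x^*,b^*)\in\mbox{Gr}(B)$ gives $\inner{b_k-b^*}{y_k-x^*}\ge-\varepsilon_k$. Adding these and using $a^*+b^*=0$ yields the key inequality
\[
\inner{v_k}{y_k-x^*}\ \ge\ \mu\,\normq{y_k-x^*}-\varepsilon_k .
\]

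Next I would expand $\normq{x_k-x^*}$ using the extragradient step $x_k=x_{k-1}-\lambda_k v_k$ together with the splitting $\inner{v_k}{x_{k-1}-x^*}=\inner{v_k}{y_k-x^*}+\inner{v_k}{x_{k-1}-y_k}$. Substituting the key inequality for the first inner product and writing $\lambda_k v_k=r_k+(x_{k-1}-y_k)$ with $r_k:=\lambda_k v_k+y_k-x_{k-1}$, the remaining cross terms collapse and one is left with
\[
\normq{x_k-x^*}\le \normq{x_{k-1}-x^*}-2\lambda_k\mu\,\normq{y_k-x^*}-\normq{x_{k-1}-y_k}+\bigl(\normq{r_k}+2\lambda_k\varepsilon_k\bigr).
\]
The criterion \eqref{eq:ec} bounds the bracket by $\sigma_k^2\normq{x_{k-1}-y_k}\le\sigma^2\normq{x_{k-1}-y_k}$, producing the descent inequality
\[
\normq{x_k-x^*}\le \normq{x_{k-1}-x^*}-2\lambda_k\mu\,\normq{y_k-x^*}-(1-\sigma^2)\,\normq{x_{k-1}-y_k}.
\]

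The crux, and the step I expect to require the most care, is to merge the two negative terms into a contraction of $\normq{x_{k-1}-x^*}$ with exactly the advertised rate. For this I would invoke the elementary estimate $p\norm{u}^2+q\norm{w}^2\ge\frac{pq}{p+q}\norm{u+w}^2$ (valid for $p,q>0$), with $u=y_k-x^*$, $w=x_{k-1}-y_k$ so that $u+w=x_{k-1}-x^*$, and $p=2\lambda_k\mu$, $q=1-\sigma^2$. This produces the one-step bound with coefficient $\theta_k:=(\tfrac{1}{2\lambda_k\mu}+\tfrac{1}{1-\sigma^2})^{-1}$; since $\lambda_k\ge\underline{\lambda}$ forces $\theta_k\ge\theta$, and since $\sigma\in[0,1)$ makes $\tfrac{1}{1-\sigma^2}\ge1$ so that $\theta\in(0,1)$, the contraction $\normq{x_k-x^*}\le(1-\theta)\normq{x_{k-1}-x^*}$ follows. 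Finally, reading the descent inequality as $(1-\sigma^2)\normq{x_{k-1}-y_k}\le\normq{x_{k-1}-x^*}$ controls $\norm{y_k-x_{k-1}}$; combining this with $\lambda_k\norm{v_k}\le\norm{r_k}+\norm{x_{k-1}-y_k}\le(1+\sigma)\norm{y_k-x_{k-1}}$ and $2\lambda_k\varepsilon_k\le\sigma^2\normq{y_k-x_{k-1}}$, then substituting the geometric decay of $\norm{x_{k-1}-x^*}$ and $\lambda_k\ge\underline{\lambda}$, yields the stated bounds after simplifying $\tfrac{1+\sigma}{\sqrt{1-\sigma^2}}=\sqrt{\tfrac{1+\sigma}{1-\sigma}}$.
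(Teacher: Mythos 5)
Your proof is correct and follows essentially the same route as the paper's: your strong-monotonicity/enlargement inequality is exactly item (c) of Proposition~\ref{pr:b1}, your descent inequality and one-step contraction are exactly the two inequalities of Proposition~\ref{pr:b1}(d), and your final bounds on $\|v_k\|$ and $\varepsilon_k$ are read off from the error criterion \eqref{eq:ec} just as in Lemma~\ref{lm:1} and Proposition~\ref{th:pt}. The only presentational differences are that you obtain the descent inequality by direct expansion of squared norms instead of the paper's prox-minimization interpretation (items (a)--(b) of Proposition~\ref{pr:b1}), and you merge the two negative terms via the harmonic-mean inequality $p\|u\|^2+q\|w\|^2\ge \frac{pq}{p+q}\|u+w\|^2$, which is equivalent to the paper's constrained scalar minimization.
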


\section{
{Regularized  HPE 
methods for solving MIPs}}
\label{sec:imp.hpe}

This section presents 
regularized HPE-type methods for 
solving MIPs whose pointwise iteration-complexity 
is superior to
the one for the original HPE method 
(see \cite{mon.sva-hpe.siam10}). It is shown that the new pointwise 
bound is worse than the ergodic one for the
original HPE method by only a logarithmic factor.

This section considers the 
MIP \eqref{eq:mip}
where $B:X\tos X$ is a point-to-set maximal monotone operator such that $B^{-1}(0)\neq \emptyset$,
and discusses regularized HPE-type methods which, for a 
given point $x_0 \in X$, consist of  solving MIPs parametrized by a scalar $\mu>0$ as in \eqref{eq:mip.mu}.
%
%
Observe that \eqref{eq:mip.mu} is a regularized version of \eqref{eq:mip}. Its operator
is $\mu$-strongly monotone and approaches the one of \eqref{eq:mip} as $\mu>0$ approaches zero.
Clearly, \eqref{eq:mip.mu}  is a special case of~\eqref{eq:inc.p}
with $A(x)=\mu(x-x_0)$ and its solution set
is a singleton by Minty's theorem 
\cite{min-mon.duke62}.

We denote the distance of $x_0$ to the solution sets of \eqref{eq:mip} and \eqref{eq:mip.mu} by $d_0$ and $d_\mu$,
respectively. Clearly, 
\begin{align}
  \label{eq:dmuxmu}
  d_\mu=\norm{x^*_\mu-x_0}
\end{align}
where $x^*_\mu$ denotes the unique solution of \eqref{eq:mip.mu}, i.e., 
$x^*_\mu=(\mu^{-1}B+I)^{-1}(x_0)$.

The following simple technical result relates $d_\mu$ with $d_0$.

\begin{lemma}
\label{lm:dz}
For every $\mu>0$, $d_\mu\leq d_0$.
\end{lemma}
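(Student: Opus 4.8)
The plan is to exploit the firm nonexpansiveness of the resolvent, which provides the cleanest route to the inequality $d_\mu \le d_0$. First I would recall that $x^*_\mu = (\mu^{-1}B + I)^{-1}(x_0)$ as stated in the excerpt, so $x^*_\mu$ is the image of $x_0$ under the resolvent $J := (I + \mu^{-1}B)^{-1}$. Let $x^*$ denote any solution of \eqref{eq:mip}, i.e., $0 \in B(x^*)$. Then $x^* = (I + \mu^{-1}B)^{-1}(x^*) = J(x^*)$, because $0 \in B(x^*)$ is equivalent to $(x^* - x^*)/\mu^{-1} = 0 \in B(x^*)$, so $x^*$ is a fixed point of the resolvent $J$.

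The key step is then the standard fact that the resolvent $J$ of a maximal monotone operator is nonexpansive: $\norm{J(u) - J(w)} \le \norm{u - w}$ for all $u, w \in X$. Applying this with $u = x_0$ and $w = x^*$ gives
\begin{align*}
  d_\mu = \norm{x^*_\mu - x_0} = \norm{J(x_0) - x_0},
\end{align*}
but more directly $\norm{J(x_0) - J(x^*)} \le \norm{x_0 - x^*}$, i.e. $\norm{x^*_\mu - x^*} \le \norm{x_0 - x^*}$. To convert this into the desired bound on $d_\mu = \norm{x^*_\mu - x_0}$, I would instead use the characterization $(x_0 - x^*_\mu)/\mu^{-1} \in B(x^*_\mu)$, that is, $\mu(x_0 - x^*_\mu) \in B(x^*_\mu)$, together with $0 \in B(x^*)$ and the monotonicity of $B$. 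Pairing these two elements of the graph yields $\inner{\mu(x_0 - x^*_\mu) - 0}{x^*_\mu - x^*} \ge 0$, hence $\inner{x_0 - x^*_\mu}{x^*_\mu - x^*} \ge 0$.

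From this inner-product inequality the conclusion follows by a short computation: writing $x_0 - x^* = (x_0 - x^*_\mu) + (x^*_\mu - x^*)$ and expanding,
\begin{align*}
  \normq{x_0 - x^*} = \normq{x_0 - x^*_\mu} + 2\inner{x_0 - x^*_\mu}{x^*_\mu - x^*} + \normq{x^*_\mu - x^*} \ge \normq{x_0 - x^*_\mu} = d_\mu^2,
\end{align*}
where the inequality drops the nonnegative middle and last terms. Taking the infimum over all solutions $x^*$ of \eqref{eq:mip} gives $d_\mu \le \norm{x_0 - x^*}$ for every $x^*$, and therefore $d_\mu \le d_0$. I do not anticipate a genuine obstacle here; the only point requiring mild care is confirming that $x^*$ being a solution of \eqref{eq:mip} is legitimately compared against $x^*_\mu$ via monotonicity of $B$, which is immediate since $B$ is maximal monotone and both $(x^*, 0)$ and $(x^*_\mu, \mu(x_0 - x^*_\mu))$ lie in $\Graph(B)$.
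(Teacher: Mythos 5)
Your proof is correct and, once you discard the initial detour through resolvent nonexpansiveness (which, as you note yourself, bounds the wrong quantity), it coincides with the paper's own argument: pair $(x^*_\mu,\mu(x_0-x^*_\mu))$ with $(x^*,0)$ via monotonicity of $B$ to get the cross-term inequality, then expand $\norm{x_0-x^*}^2$ and drop the nonnegative terms. The only cosmetic difference is that the paper takes $x^*$ to be the projection of $x_0$ onto $B^{-1}(0)$ so that $\norm{x^*-x_0}=d_0$ directly, whereas you take an infimum over all solutions at the end; both are valid.
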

\begin{proof}
 Let $x^*$ be the projection of $x_0$ onto $B^{-1}(0)$.
 Since $0\in B(x^*)$ and 
 $\mu(x_0-x^*_\mu)\in B(x^*_\mu)$, 
 the monotonicity of $B$ and the fact that
 $\mu>0$ imply that
%
 $\inner{x^*-x^*_\mu}{x^*_\mu -x_0}\geq 0$.
%
Therefore, 
\begin{align*}
  d_0^2=\norm{x^*-x_0}^2=
 \norm{x^*-x^*_\mu}^2+
 2\inner{x^*-x^*_\mu}{x^*_\mu-x_0}+
  \norm{x^*_\mu-x_0}^2\geq
   \norm{x^*-x^*_\mu}^2+d_\mu^2
\end{align*}
and the conclusion follows.
\end{proof}

We now state a $\mu$-regularized 
HPE method for solving \eqref{eq:mip} 
which is simply Algorithm 1 (with $A(\cdot)=\mu (\cdot-x_0)$) applied to MIP \eqref{eq:mip.mu}
but with a termination criterion added.

\vgap
\vgap

\noindent
\fbox{
\begin{minipage}[h]{6.6 in}
{\bf Algorithm 2:}
{A static $\mu$-regularized HPE method for solving \eqref{eq:mip}.} 
\begin{itemize}
\item[] \mbox{Input:} $(x_0,\sigma,\mu,\rho,
\varepsilon)\in X\times [0,1)\times \R_{++}
\times \R_{++}\times \R_{++}$; 
\item[(0)] set $k=1$;
\item[(1)] choose $\lambda_k>0$ and find 
$(y_k,b_k,\varepsilon_k)\in X\times X\times \R_+$  
       such that
       \begin{equation}\label{eq:ec.2}
b_k\in B^{[\varepsilon_k]}(y_k),\quad
\|\lambda_k\left[b_k+\mu(y_k-x_0)\right]+y_k-x_{k-1}\|^2+2\lambda_k\varepsilon_k\leq 
        \sigma^2\|y_k-x_{k-1}\|^2;
       \end{equation}
\item[(2)] if $\norm{b_k+\mu(y_k-x_0)}> \rho$
or $\varepsilon_k> \varepsilon$,
then set
          \begin{equation}
					\label{eq:es.2}
					x_k=x_{k-1}-\lambda_k
\left[b_k+\mu(y_k-x_0)\right],
          \end{equation}
           
and $k\leftarrow k+1$, and go to step 1;
otherwise, stop the algorithm and output
$(y_k,b_k,\varepsilon_k)$. 
\end{itemize}
\noindent
{\bf end}
\end{minipage}
}

\vgap
\vgap
We now make some remarks about Algorithm 2.
First, it is the special case of Algorithm 1 in which $ A(\cdot)=\mu(\cdot-x_0)$,
and hence solves the MIP \eqref{eq:mip.mu}.
Second, since Subsection~\ref{sec:smhpe} only deals with
convergence rate bounds, a stopping criterion was not added to Algorithm 1.
In contrast, Algorithm~2 incorporates
a stopping criterion (see step 2 above) based on which  its iteration-complexity bound is derived
in Proposition~\ref{th:c.alg2} and  Theorem~\ref{cr:c.alg2} below.
Third, it is shown in Theorem \ref{cr:c.alg2}(b) 
that Algorithm~2 solves MIP \eqref{eq:mip} if $\mu$ is chosen sufficiently small.

\begin{proposition}
\label{th:c.alg2}
Assume that 
$\lambda_k\geq \underline{\lambda}>0$
for all $k\geq 1$
and let $d_\mu$ be as in \eqref{eq:dmuxmu}.
Then, Algorithm 2 
with input 
$(x_0,\sigma,\mu,\rho,\varepsilon)$ terminates 
in at most
\begin{align}
\label{eq:it.c}
 \left(\dfrac{1}{2\underline{\lambda}\mu}
 +\dfrac{1}{1-\sigma^2}\right)
\left[ 2 + 
\max\left\{\log^+\left(
\left[\dfrac{1+\sigma}{1-\sigma}\right]
\dfrac{d_\mu^2}
{\underline{\lambda}^2\rho^2}\right),
\log^+\left(\dfrac{\sigma^2d_\mu^2}{2(1-\sigma^2)
\underline{\lambda}\varepsilon}\right)
\right\} \right]
\end{align}
iterations with a triple  $(y_k,b_k,\varepsilon_k)$ which,
in addition to satisfying the stopping criterion in step 2 of Algorithm 2, namely,
\begin{equation} 
\label{eq:stop}
\norm{b_k+\mu(y_k-x_0)}\leq \rho, \quad 
\varepsilon_k\leq \varepsilon,
\end{equation}
it also satisfies the inequalities
\begin{align}
& \| y_k - x_0\| 
\le \left(1+\dfrac{1}{\sqrt{1-\sigma^2}}\right)d_\mu 
\le \left(1+\dfrac{1}{\sqrt{1-\sigma^2}}\right)d_0 , 
\label{eq:bound.aux} \\
\label{eq:bound.b}
&  \norm{b_k}\leq \rho+
\mu\left(1+\dfrac{1}{\sqrt{1-\sigma^2}}\right)
d_\mu\leq \rho +
\mu\left(1+\dfrac{1}{\sqrt{1-\sigma^2}}\right)d_0.
\end{align}
\end{proposition}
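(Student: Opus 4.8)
The plan is to exploit the fact, noted in the remarks following Algorithm~2, that Algorithm~2 is exactly Algorithm~1 applied to \eqref{eq:mip.mu} with $A(\cdot)=\mu(\cdot-x_0)$; hence its iterates satisfy \eqref{eq:ec} with $v_k=b_k+\mu(y_k-x_0)$, and Proposition~\ref{pr:mmm} applies verbatim with its ``$d_0$'' played by $d_\mu$ and with $x^*:=x_\mu^*$ the unique solution of \eqref{eq:mip.mu}. In particular I may use for free the decay bounds on $\|v_k\|=\|b_k+\mu(y_k-x_0)\|$ and on $\varepsilon_k$, as well as the contraction $\|x_k-x^*\|\le(1-\theta)^{k/2}d_\mu\le d_\mu$.

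For the iteration count I would determine the smallest $k$ for which the two decay estimates of Proposition~\ref{pr:mmm} themselves drop below the thresholds, i.e.\ $\|v_k\|\le\rho$ and $\varepsilon_k\le\varepsilon$; since these are upper bounds valid for every $k$, once $k$ is that large the algorithm must already have stopped (possibly earlier). Solving $\sqrt{(1+\sigma)/(1-\sigma)}\,(1-\theta)^{(k-1)/2}d_\mu/\underline\lambda\le\rho$ and the analogous inequality for $\varepsilon_k$ for $k$ gives, after taking logarithms and using $\log^+$ to absorb the cases where the argument lies below $1$, the requirement $k-1\ge T/(-\log(1-\theta))$, where $T$ is the maximum of the two $\log^+$ terms appearing in \eqref{eq:it.c}. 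The remaining step is to convert $-\log(1-\theta)$ into the factor $1/\theta=1/(2\underline\lambda\mu)+1/(1-\sigma^2)$ via the elementary inequality $-\log(1-\theta)\ge\theta$, and to absorb the ceiling and the ``$+1$'' coming from $k=\lceil\cdot\rceil+1$ into the additive constant $2$, using $1/\theta>1$ so that $2\le 2/\theta$; this yields precisely \eqref{eq:it.c}. The stopping inequalities \eqref{eq:stop} hold automatically by the design of step~2.

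The estimate \eqref{eq:bound.aux} is the only part requiring a genuinely new computation, and I expect it to be the main obstacle. The clean constant $1+1/\sqrt{1-\sigma^2}$ forces me to bound $\|y_k-x_0\|$ through the solution, $\|y_k-x_0\|\le\|y_k-x^*\|+\|x^*-x_0\|=\|y_k-x^*\|+d_\mu$, and hence to prove the key one-step estimate $\|y_k-x^*\|\le\|x_{k-1}-x^*\|/\sqrt{1-\sigma^2}$; the naive route through $\|y_k-x_{k-1}\|$ or through $x_{k-1}$ produces a strictly worse constant, which is why this step is delicate. I would obtain it by first extracting, directly from the error criterion \eqref{eq:ec.2} and the update \eqref{eq:es.2}, the two facts $\|y_k-x_k\|\le\sigma\|y_k-x_{k-1}\|$ and the standard HPE descent $\|x_k-x^*\|^2\le\|x_{k-1}-x^*\|^2-(1-\sigma^2)\|y_k-x_{k-1}\|^2$, and then combining them through the triangle inequality $\|y_k-x^*\|\le\|y_k-x_k\|+\|x_k-x^*\|$ together with a one-line Young/Cauchy--Schwarz weighting (weights $\sigma^2$ and $1-\sigma^2$) that makes the $\|y_k-x_{k-1}\|^2$ contributions cancel. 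With $\|x_{k-1}-x^*\|\le d_\mu$ in hand this gives $\|y_k-x^*\|\le d_\mu/\sqrt{1-\sigma^2}$, hence \eqref{eq:bound.aux}; Lemma~\ref{lm:dz} then supplies the final bound by $d_0$.

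Finally, \eqref{eq:bound.b} follows from \eqref{eq:bound.aux} by the triangle inequality alone: at termination $\|b_k+\mu(y_k-x_0)\|\le\rho$, so $\|b_k\|\le\|b_k+\mu(y_k-x_0)\|+\mu\|y_k-x_0\|\le\rho+\mu(1+1/\sqrt{1-\sigma^2})d_\mu$, and $d_\mu\le d_0$ gives the second inequality.
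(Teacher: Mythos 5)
Your proposal is correct, and its overall architecture coincides with the paper's: the iteration count is obtained exactly as in the paper (apply Proposition \ref{pr:mmm} to the regularized inclusion with $d_\mu$ in place of $d_0$, take logarithms, use $-\log(1-\theta)\geq\theta$, and absorb the additive constants via $\theta^{-1}>1$), and \eqref{eq:bound.b} is the same triangle-inequality step. The one genuine difference is in \eqref{eq:bound.aux}: the paper does \emph{not} prove the key estimate $\norm{y_k-x^*_\mu}\leq \norm{x_{k-1}-x^*_\mu}/\sqrt{1-\sigma^2}$ but imports it from an external reference (Lemma 2.1(5) of \cite{prep-impa-benar2015}), whereas you derive it from scratch, and your derivation checks out. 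Indeed, \eqref{eq:ec.2} and \eqref{eq:es.2} give $\norm{y_k-x_k}=\norm{\lambda_k v_k+y_k-x_{k-1}}\leq\sigma\norm{y_k-x_{k-1}}$ with $v_k:=b_k+\mu(y_k-x_0)$, while the HPE descent inequality $\norm{x_k-x^*_\mu}^2\leq\norm{x_{k-1}-x^*_\mu}^2-(1-\sigma^2)\norm{y_k-x_{k-1}}^2$ is available either by rederivation or as the first inequality of Proposition \ref{pr:b1}(d) after dropping the nonnegative term $2\lambda_k\mu\norm{x^*-y_k}^2$; then, with $a:=\norm{y_k-x_{k-1}}$, $b:=\norm{x_k-x^*_\mu}$ and $c:=\norm{x_{k-1}-x^*_\mu}$, your weighted Cauchy--Schwarz step gives
\begin{equation*}
\norm{y_k-x^*_\mu}\;\leq\;\sigma a+b
\;=\;\frac{\sigma}{\sqrt{1-\sigma^2}}\left(\sqrt{1-\sigma^2}\,a\right)+b
\;\leq\;\frac{1}{\sqrt{1-\sigma^2}}\sqrt{(1-\sigma^2)a^2+b^2}
\;\leq\;\frac{c}{\sqrt{1-\sigma^2}},
\end{equation*}
which is precisely the cited estimate; combining it with $c\leq d_\mu$ (Proposition \ref{pr:mmm} and \eqref{eq:dmuxmu}), the triangle inequality, and Lemma \ref{lm:dz} yields \eqref{eq:bound.aux}. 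What your route buys is a self-contained proposition with no dependence on the unpublished preprint; what the paper's route buys is brevity. Both are valid.
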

\begin{proof}
%
%
%
To prove \eqref{eq:it.c} assume
that Algorithm 2 has not terminated at the $k$-th
iteration, and define $v_k=b_k+\mu(y_k-x_0)$. 
Then, either
$\norm{v_k}>\rho$ or $\varepsilon_k>\varepsilon$.
Assume first that $\norm{v_k}>\rho$.
Since Algorithm 2 is a special case of Algorithm 1
applied to MIP \eqref{eq:mip.mu}
with $A(x)=\mu(x-x_0)$
and $v_k$ as above,
the latter assumption 
and Corollary~\ref{pr:mmm} imply that
\[
 \rho<\|v_k\| \le \sqrt{\dfrac{1+\sigma}{1-\sigma}}
 \left(\frac{(1-\theta)^{(k-1)/2}}{\underline{\lambda}}
 \right) d_\mu
\]
where $\theta$ is defined in \eqref{eq:def.theta}.
Rearranging this inequality, taking logarithms of both sides of the resulting  inequality and using
the fact that
$\log(1-\theta)\leq -\theta$,  we conclude that
\[
 k< 1+\theta^{-1}\log
\left(\left[\dfrac{1+\sigma}{1-\sigma}\right]\dfrac{d_\mu^2}
{\underline{\lambda}^2\rho^2}\right).
\]
If, on the other hand, $\varepsilon_k>\varepsilon$,
we conclude by using a similar reasoning that
\[
 k< 1+\theta^{-1}\log
\left(\dfrac{\sigma^2 d_\mu^2}
{2(1-\sigma^2)\underline{\lambda}\varepsilon}\right).
\]
From the above two observations and the fact that $\theta < 1$ in view of \eqref{eq:def.theta},
\eqref{eq:it.c} follows. 

To prove \eqref{eq:bound.aux}, note that
Lemma 2.1(5) of \cite{prep-impa-benar2015}, Corollary \ref{pr:mmm}
and \eqref{eq:dmuxmu} imply that
\[
\norm{y_k-x^*_\mu} \leq
\dfrac{\norm{x_{k-1}-x^*_\mu}}{\sqrt{1-\sigma^2}}
\le \dfrac{(1-\theta)^{(k-1)/2}}
{\sqrt{1-\sigma^2}} d_\mu
\leq \dfrac{1}{\sqrt{1-\sigma^2}}d_\mu,
\]
and hence that
\begin{align*}
\norm{y_k-x_0}\leq \norm{y_k-x^*_\mu}+\norm{x^*_\mu-x_0}
\leq 
\left(1+\dfrac{1}{\sqrt{1-\sigma^2}}\right)d_\mu.
\end{align*}
The latter conclusion and Lemma \ref{lm:dz}
yield \eqref{eq:bound.aux}. 
To finish the proof, note that \eqref{eq:bound.b}
follows from the first inequality in \eqref{eq:stop}, 
the triangle inequality and \eqref{eq:bound.aux}.
\end{proof}

The complexity results presented in this paper will
consist in establishing bounds in the number of
iterations to obtain a triple $(y,b,\varepsilon)$ 
satisfying \eqref{eq:appsol.c}, 
%
%
for given precisions $\bar\rho>0$ and 
$\bar \varepsilon>0$.  

The following result shows that Algorithm 2 solves the MIP \eqref{eq:mip} when $\mu>0$
is chosen sufficiently small.

\begin{theorem}
\label{cr:c.alg2}
Assume that 
$\lambda_k\geq \underline{\lambda}>0$
for all $k\geq 1$
and let 
a tolerance pair
 $(\bar\rho,\bar\varepsilon)\in \R_{++}\times \R_{++}$
be given.
Then, the following statements hold:
\begin{itemize}
\item[\emph{(a)}]
for any $\rho\in (0,\bar\rho)$ and 
$\mathcal{D}_0>0$,
Algorithm 2 with input 
$(x_0,\sigma,\mu,\rho,\varepsilon)$
where
\begin{align}
\label{eq:def.mu.e}
 \mu=\mu(\mathcal{D}_0,\rho) := \dfrac{\bar\rho-\rho}
{\left[1+\dfrac{1}{\sqrt{1-\sigma^2}}\right]
\mathcal{D}_0},
\quad
\varepsilon=\bar\varepsilon
\end{align}
terminates 
in at most
\begin{align}
\label{eq:it.cc}
\left(\frac{\left[1+1/\sqrt{1-\sigma^2}\right]
\mathcal{D}_0}{2\underline{\lambda}(\bar\rho-\rho)}
 +\dfrac{1}{1-\sigma^2}\right)
\left[ 2+\max\left\{\log^{+}\left(
\left[\dfrac{1+\sigma}{1-\sigma}\right]
\dfrac{d_0^2}
{\underline{\lambda}^2\rho^2} \right),
\log^+\left(\dfrac{\sigma^2 d_0^2}
{2(1-\sigma^2)\underline{\lambda}\bar 
\varepsilon}\right)
\right\} \right]
\end{align}
iterations;
\item[\emph{(b)}]
 if $\mathcal{D}_0 \ge d_0$, 
then Algorithm 2 with the above input terminates 
with a triple $(y_k,b_k,\varepsilon_k)$
satisfying 
\begin{align}
\label{eq:sev}
b_k\in B^{[\varepsilon_k]}(y_k), \quad 
\norm{b_k}\leq \bar\rho,  \quad \varepsilon_k\leq 
\bar\varepsilon, \quad \mu \|y_k-x_0\| 
\le \bar \rho - \rho.
\end{align}
\end{itemize}
\end{theorem}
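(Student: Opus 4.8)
The plan is to treat this theorem as a direct corollary of Proposition 1.2 (`th:c.alg2`), which already does all the heavy lifting for a fixed $\mu$. The key observation is that the specific choice of $\mu = \mu(\mathcal{D}_0,\rho)$ in \eqref{eq:def.mu.e} is engineered precisely so that the bound \eqref{eq:bound.b} on $\|b_k\|$ collapses to the clean tolerance $\bar\rho$.

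For part (a), I would first invoke Proposition 1.2, whose iteration bound \eqref{eq:it.c} is stated in terms of $d_\mu$ and the generic input $(\rho,\varepsilon)$. I would substitute $\varepsilon = \bar\varepsilon$ and the explicit value of $\mu$. The leading factor $\tfrac{1}{2\underline{\lambda}\mu} + \tfrac{1}{1-\sigma^2}$ becomes exactly the leading factor in \eqref{eq:it.cc} once $1/\mu$ is expanded using \eqref{eq:def.mu.e}. For the logarithmic terms, the only remaining step is to replace $d_\mu$ by $d_0$: since $d_\mu \le d_0$ by Lemma 1.1 (`lm:dz`), and since $\log^+$ is nondecreasing, each $\log^+$ term with $d_\mu^2$ is bounded above by the corresponding term with $d_0^2$. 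This upgrades \eqref{eq:it.c} into \eqref{eq:it.cc}.

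For part (b), assume $\mathcal{D}_0 \ge d_0$. The first three conditions in \eqref{eq:sev}, namely $b_k \in B^{[\varepsilon_k]}(y_k)$ and $\varepsilon_k \le \bar\varepsilon$, are immediate: membership holds by construction in \eqref{eq:ec.2}, and $\varepsilon_k \le \varepsilon = \bar\varepsilon$ is the second half of the stopping criterion \eqref{eq:stop}. The crux is the bound $\|b_k\| \le \bar\rho$. Starting from \eqref{eq:bound.b}, I have
\[
\|b_k\| \le \rho + \mu\left(1+\tfrac{1}{\sqrt{1-\sigma^2}}\right) d_0.
\]
Now plug in $\mu = \mu(\mathcal{D}_0,\rho)$ from \eqref{eq:def.mu.e}; the factor $\left(1+\tfrac{1}{\sqrt{1-\sigma^2}}\right)$ cancels against the denominator, leaving $\mu\left(1+\tfrac{1}{\sqrt{1-\sigma^2}}\right) d_0 = (\bar\rho-\rho)\,d_0/\mathcal{D}_0$. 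Since $\mathcal{D}_0 \ge d_0$, this is at most $\bar\rho - \rho$, and hence $\|b_k\| \le \rho + (\bar\rho-\rho) = \bar\rho$. The same computation, applied to \eqref{eq:bound.aux} instead, gives $\mu\|y_k - x_0\| \le \mu\left(1+\tfrac{1}{\sqrt{1-\sigma^2}}\right)d_0 \le \bar\rho - \rho$, which is the final inequality in \eqref{eq:sev}.

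The proof is essentially bookkeeping, so there is no genuine obstacle; the one point demanding care is confirming that the cancellation in part (b) is exact and that the inequalities run in the right direction. The dependence on $\mathcal{D}_0 \ge d_0$ must enter only in part (b) — in part (a) the bound \eqref{eq:it.cc} holds for arbitrary $\mathcal{D}_0>0$ because there we use $d_\mu \le d_0$ purely to control the logarithms, not the choice of $\mu$. Keeping these two roles of $d_0$ separate is the only subtlety worth flagging.
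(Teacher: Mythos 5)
Your proposal is correct and matches the paper's own proof essentially step for step: part (a) via Proposition \ref{th:c.alg2}, the substitution \eqref{eq:def.mu.e}, and Lemma \ref{lm:dz} to pass from $d_\mu$ to $d_0$; part (b) via the second inequalities in \eqref{eq:bound.aux} and \eqref{eq:bound.b} together with the exact cancellation of the factor $1+1/\sqrt{1-\sigma^2}$. (One trivial wording slip: you call the inclusion and $\varepsilon_k\leq\bar\varepsilon$ the ``first three conditions'' of \eqref{eq:sev} while listing only two, but the mathematics is unaffected.)
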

\begin{proof}
Note that \eqref{eq:it.cc} 
follows from \eqref{eq:it.c}, \eqref{eq:def.mu.e} and
Lemma \ref{lm:dz}. 
Using the second inequalities in 
\eqref{eq:bound.aux} and 
\eqref{eq:bound.b}
and the first identity in \eqref{eq:def.mu.e} 
we find
\[
 \max\left\{\norm{b_k}-\rho,
 \mu\norm{y_k-x_0}\right\}\leq
\frac{d_0}
{\mathcal{D}_0} (\bar \rho-\rho). 
\]
Thus, if $\mathcal{D}_0\geq d_0$, then
the latter inequality yields 
the second and the fourth inequalities
in \eqref{eq:sev}.
The inclusion and the third inequality in
\eqref{eq:sev} follow from \eqref{eq:ec.2}
and \eqref{eq:stop}, respectively.
\end{proof}

We now make two remarks about Theorem \ref{cr:c.alg2}.
First, if $\sigma \in [0,1)$ is such that $(1-\sigma)^{-1}={\cal O}(1)$, an upper bound ${\cal D}_0 \ge d_0$ such that
${\cal D}_0 = {\cal O}(d_0)$ is known, and $\rho$ is set to $\bar \rho/2$,
then the complexity bound \eqref{eq:it.cc}  is
\begin{align}
\label{eq:it.c3}
 \mathcal{O}\left(\left(
\dfrac{d_0}
{\underline{\lambda}\bar \rho}
+1\right)
\left[1+
\max\left\{\log^+\left(
\dfrac{d_0}
{\underline{\lambda}\bar \rho}\right),
\log^+\left( \frac{d_0} {\underline{\lambda}\bar \varepsilon}\right)
\right\}
\right]\right).
\end{align}
Second, in general an upper bound 
${\cal D}_0$ as in the first remark is not 
known and in such case the bound
\eqref{eq:it.cc} can be much worse than the one above when ${\cal D}_0 >> d_0$.

In the remaining part of this section, we
consider the case where an upper bound ${\cal D}_0 \ge d_0$  such that ${\cal D}_0 = {\cal O}(d_0)$
is not known and describe a scheme
based on Algorithm 2 whose iteration-complexity 
order is equal to  \eqref{eq:it.c3}.

\vgap
\vgap

\noindent
\fbox{
\begin{minipage}[h]{6.6 in}
{\bf DR-HPE:} A dynamic regularized HPE method for solving \eqref{eq:mip}.
\begin{itemize}
\item[(0)] Let $x_0\in X$,
$\sigma\in [0,1)$, $\bar{\lambda}>0$ 
and a tolerance 
pair $(\bar \rho,\bar \varepsilon) \in 
\R_{++} \times \R_{++}$ be given and
choose $\rho\in (0,\bar\rho)$; set
\begin{align}
\label{eq:def.dz}
\mathcal{D}_0 = \overline{\mathcal{D}}_0:=
\dfrac{2\bar{\lambda}(\bar\rho-\rho)}
{(1-\sigma^2)\left(1+1/
\sqrt{1-\sigma^2}\right)};
\end{align}
\item[(1)] set $\mu=\mu(\mathcal{D}_0,\rho)$ where 
$\mu(\cdot,\cdot)$ is defined in 
\eqref{eq:def.mu.e}  
and call Algorithm 2 with input
$(x_0,\sigma,\mu,\rho,\bar \varepsilon)$
to obtain as output $(y,b,\varepsilon)$;
\item[(2)]
if $\mu\norm{y-x_0}\leq \bar\rho-\rho$ 
then stop and output $(y,b,\varepsilon)$;
else, set $\mathcal{D}_0 \leftarrow 2\mathcal{D}_0$
and go to step 1. 
\end{itemize}
\noindent
{\bf end}
\end{minipage}
}
\vgap
\vgap
\vgap

Each iteration of DR-HPE (referred to as an outer iteration) invokes Algorithm 2, and hence performs a certain number of iterations of the latter method (called  inner iterations)
which is bounded by \eqref{eq:it.cc}. The following result gives the overall inner-iteration-complexity of DR-HPE 
in terms of $d_0$, $\bar \lambda$,
$\rho$, $\bar \rho$ and $\bar \varepsilon$.

\begin{theorem}
\label{th:main}
Let $d_0$ denote the distance of $x_0$ to the 
solution set of \eqref{eq:mip} and
assume that the proximal stepsize in every inner iteration of
\emph{DR-HPE} is bounded below by a constant
$\underline{\lambda}>0$.
Then, \emph{DR-HPE} with input 
$(x_0,\sigma,\bar \lambda, 
(\bar \rho,\bar \varepsilon), \rho) \in 
X \times [0,1) \times \R_{++} \times 
\R^2_{++} \times \R_{++}$
such that $\rho \in (0,\bar \rho)$ and $(1-\sigma)^{-1}= {\cal} O(1)$ finds a triple 
$(y,b,\varepsilon)$ satisfying
\[
 b\in B^{[\varepsilon]}(y),\quad \norm{b}\leq \bar\rho,
\quad \varepsilon\leq \bar\varepsilon
\]
in at most
\begin{align}
\label{eq:c.alg3}
 \mathcal{O}\left(\left( 1 + \frac{\bar \lambda}{\underline \lambda} \right) \left(
\dfrac{d_0}
{\bar{\lambda}(\bar \rho-\rho)}
+1\right)
\left[1+
\max\left\{\log^+\left(
\dfrac{d_0}
{\underline{\lambda}\rho}\right),
\log^+\left( \frac{d_0} {\underline{\lambda}\bar \varepsilon}\right)
\right\}
\right]\right)
\end{align}
%
iterations.
\end{theorem}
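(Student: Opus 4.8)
The plan is to treat DR-HPE as an outer loop that repeatedly doubles the guess $\mathcal{D}_0$, and to bound first the number of outer iterations and then the total number of inner (Algorithm~2) iterations they trigger. Index the outer iterations by $j=1,2,\dots$ and write $\mathcal{D}_0^{(j)}=2^{j-1}\overline{\mathcal{D}}_0$ for the value of $\mathcal{D}_0$ used in the $j$-th call to Algorithm~2, with $\overline{\mathcal{D}}_0$ given by \eqref{eq:def.dz}. The first step is to observe, via Theorem~\ref{cr:c.alg2}(b), that as soon as $\mathcal{D}_0^{(j)}\ge d_0$ the triple returned by Algorithm~2 satisfies $\mu\norm{y-x_0}\le\bar\rho-\rho$, which is precisely the stopping test in step~(2) of DR-HPE. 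Hence the outer loop halts at some index $N\le 1+\lceil\log_2^+(d_0/\overline{\mathcal{D}}_0)\rceil$, and since doubling cannot overshoot $d_0$ by more than a factor of two, the last used guess obeys $\overline{\mathcal{D}}_0\le\mathcal{D}_0^{(N)}\le\max\{\overline{\mathcal{D}}_0,2d_0\}\le\overline{\mathcal{D}}_0+2d_0$.

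Next I would sum the inner-iteration bound \eqref{eq:it.cc} over $j=1,\dots,N$. The key structural point is that \eqref{eq:it.cc} splits as $(\alpha\,\mathcal{D}_0^{(j)}+\beta)\,L$, where $\alpha=(1+1/\sqrt{1-\sigma^2})/(2\underline{\lambda}(\bar\rho-\rho))$ and $\beta=1/(1-\sigma^2)$ are constants and, crucially, the logarithmic factor $L$ (the bracketed max-of-logs) depends only on $d_0,\rho,\bar\varepsilon$ and not on the outer index $j$; this is exactly because Lemma~\ref{lm:dz} already allowed each $d_\mu$ to be replaced by $d_0$ in passing from \eqref{eq:it.c} to \eqref{eq:it.cc}. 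Summing gives the total as at most $L\bigl(\alpha\sum_{j=1}^N\mathcal{D}_0^{(j)}+\beta N\bigr)$, and the geometric series collapses under the doubling, $\sum_{j=1}^N\mathcal{D}_0^{(j)}=\overline{\mathcal{D}}_0(2^N-1)\le 2\mathcal{D}_0^{(N)}$, so the polynomial part is dominated up to a factor $2$ by the single last term $\alpha\,\mathcal{D}_0^{(N)}$.

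Finally I would substitute the explicit $\overline{\mathcal{D}}_0$ from \eqref{eq:def.dz} together with $\mathcal{D}_0^{(N)}\le\overline{\mathcal{D}}_0+2d_0$ and simplify under the standing assumption $(1-\sigma)^{-1}=\mathcal{O}(1)$. The cancellation $\alpha\overline{\mathcal{D}}_0=\bar\lambda/(\underline{\lambda}(1-\sigma^2))$ reduces to $\mathcal{O}(\bar\lambda/\underline{\lambda})$, while $\alpha\cdot 2d_0$ reduces to $\mathcal{O}(d_0/(\underline{\lambda}(\bar\rho-\rho)))$; both are bounded by the product $(1+\bar\lambda/\underline{\lambda})(d_0/(\bar\lambda(\bar\rho-\rho))+1)$ in \eqref{eq:c.alg3} (using $P\ge\bar\lambda/\underline{\lambda}$ and $P\ge d_0/(\underline{\lambda}(\bar\rho-\rho))$), and $L$ is $\mathcal{O}$ of the bracketed log factor there after the $\sigma$-dependent constants are absorbed.

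The term I expect to be the \textbf{main obstacle} is the additive contribution $\beta N$: naively $N$ is itself logarithmic in $d_0$, so multiplying it by $L$ would produce an unwanted square of logarithms, which does not appear in \eqref{eq:c.alg3}. The resolution is to bound $N$ not by a logarithm but crudely by the polynomial factor. Using $\overline{\mathcal{D}}_0=\Theta(\bar\lambda(\bar\rho-\rho))$ and the elementary inequality $\log^+(t)\le t$, one gets $N=\mathcal{O}(1+\log^+(d_0/(\bar\lambda(\bar\rho-\rho))))=\mathcal{O}(d_0/(\bar\lambda(\bar\rho-\rho))+1)$, so $\beta N$ is absorbed into the same polynomial factor $P$ and the product $\beta N\,L$ remains within a single logarithmic factor. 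Collecting the polynomial and the additive pieces then yields the bound \eqref{eq:c.alg3}.
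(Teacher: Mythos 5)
Your proposal is correct and takes essentially the same approach as the paper's proof: you bound the number of outer iterations via Theorem~\ref{cr:c.alg2}(b) and the doubling of $\mathcal{D}_0$, sum the per-call inner bounds of Theorem~\ref{cr:c.alg2}(a) as a geometric series collapsing to the last term, and then absorb the additive $\beta N$ contribution into the polynomial factor, where your use of $\log^+(t)\le t$ plays exactly the role of the paper's bound $K\le 2^K$ in \eqref{eq:ktilde}. The only differences are cosmetic (the paper packages the constants into $\beta_0,\beta_1$ and argues the cases $K=1$ and $K>1$ separately, while you track $\alpha,\beta$ and use $\mathcal{D}_0^{(N)}\le\max\{\overline{\mathcal{D}}_0,2d_0\}$), so there is no substantive gap.
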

\begin{proof}
Note that at 
the $k$-th outer iteration of DR-HPE,  
we have $\mathcal{D}_0= 2^{k-1} \overline{\mathcal{D}}_0$. Moreover, in view of 
Theorem \ref{cr:c.alg2}(b),
DR-HPE terminates in at most $K$ outer iterations where $K$ is the smallest integer $k \ge 1$ satisfying 
$2^{k-1} \overline{\mathcal{D}}_0 \ge d_0$, i.e.,
\begin{align*}
 K=1+\left\lceil\log^+\left(
\dfrac{d_0}
{\overline{\mathcal{D}}_0}\right)\right\rceil.
\end{align*}
%
%
%
Define
\begin{align}
\label{eq:bum}
\beta_1 &:=2+
\max\left\{\log^+\left(
\left[\dfrac{1+\sigma}{1-\sigma}\right]\dfrac{d_0^2}
{\underline{\lambda}^2\rho^2}\right),
\log^+\left(\dfrac{\sigma^2 d_0^2}
{2(1-\sigma^2)
\underline{\lambda}\bar \varepsilon}\right)
\right\}\\
%
%
\label{eq:bz}
 \beta_0 &:=
\dfrac{\beta_1}{1-\sigma^2}
=
\dfrac{\left(1+1/\sqrt{1-\sigma^2}\right)
\overline{\mathcal{D}}_0}
{2\bar{\lambda}(\bar\rho-\rho)}
\beta_1
\end{align}
where the identity in \eqref{eq:bz} 
follows from \eqref{eq:def.dz}.
In view of Theorem \ref{cr:c.alg2}(a) and relations
\eqref{eq:bum}, \eqref{eq:bz}, we then conclude that 
the overall  number of inner iterations of 
DR-HPE is bounded by
\begin{align}
\label{eq:ktilde}
\widetilde K:=
\beta_0\sum_{k=1}^{K}\,\left(1+\frac{\bar \lambda}{\underline \lambda} 
2^{k-1}\right)=\beta_0\left[K + \frac{\bar \lambda}{\underline \lambda}( 2^{K} -1 )\right] \le \beta_0 \left( 1 + \frac{\bar \lambda}{\underline \lambda} \right)  2^{K}.
\end{align}
To prove the theorem, it suffices 
to show that $\widetilde K$ is bounded by \eqref{eq:c.alg3}.
Indeed, we consider two cases, namely, whether $K=1$ or $K>1$.
If $K=1$, then \eqref{eq:ktilde} 
implies that $\widetilde K \le 2\beta_0(1+\bar\lambda/\underline \lambda)$, and 
hence that
the order of $\widetilde K$ is bounded by \eqref{eq:c.alg3}
in view of the definition of $\beta_0$ in \eqref{eq:bz}.
Assume now that $K>1$ and note that the 
definition of $K$ implies that $k=K-1$ violates
the inequality $2^{k-1}\overline{\mathcal{D}}_0 
\ge d_0$, and hence 
that $2^{K}<4d_0/\overline{\mathcal{D}}_0$.
The latter conclusion and 
inequality \eqref{eq:ktilde} then imply that
$\widetilde {K}<4\beta_0d_0 (1+\bar\lambda/\underline \lambda)/
\overline{\mathcal{D}}_0$, 
which together with \eqref{eq:bum} and 
\eqref{eq:bz} then imply that $\widetilde K$ 
is bounded by \eqref{eq:c.alg3}.
\end{proof}

Note that  if the lower bound $\underline{\lambda}>0$  for the sequence of proximal stepsizes  is known,
and  $\bar \lambda=\underline{\lambda}$ and $\rho=\bar \rho/2$ are chosen as input for DR-HPE,
then the iteration-complexity bound \eqref{eq:c.alg3} reduces to bound \eqref{eq:it.c3}.
This observation justifies our claim 
preceding DR-HPE.

\section{Specific instances of the DR-HPE method}
\label{sec:app}

In this section, we briefly discuss 
specific ways of implementing step 1 of Algorithm 2.

More specifically, we assume that operator $B$ has the structure
\begin{equation}
\label{eq:inc.p.2}
B(x):=F(x)+C(x)
\end{equation}
where the following conditions hold:
\begin{itemize}
 \item[B.1)] $F:\mbox{Dom}(F)\subset X \to X$ 
 is a (single-valued) 
 monotone operator on 
 $\mbox{Dom}(C)\subset \mbox{Dom}(F)$, i.e.,
 \begin{align}
 \label{eq:mon.f}
  \inner{F(x)-F(x')}{x-x'}\geq 0,\quad 
 \forall x,x'\in \mbox{Dom}(C); 
 \end{align}
\item[B.2)] $F$ is $L$-Lipschitz continuous
 on a closed convex set $\Omega$ such that
 $\mbox{Dom}(C)\subset \Omega\subset \mbox{Dom}(F)$, 
 i.e., there exists $L>0$ such that
\begin{align}
\label{eq:lip}
 \|F(x)-F(x')\|\leq L\|x-x'\|\quad \forall x,x'\in 
\Omega;
\end{align}
\item[B.3)] $C:X\tos X$ is maximal monotone.
 \end{itemize}
Our goal in this section is to discuss a Tseng's modified forward-backward splitting (MFBS) type scheme for implementing
step 1 of Algorithm 2 for an operator $B$ with the above structure where two evaluations of $F$ and a single resolvent evaluation of $C$,
i.e., an operator of the form $(I+\lambda C)^{-1}$ 
for some $\lambda>0$, are made.

Let $(x_0,\sigma,\mu)$ be the first three entities of the input for Algorithm 2 and assume here that $\sigma \in (0,1)$.
Consider the MIP
%
\begin{align}
\label{eq:mip.mu2}
 0\in B_\mu(x):=F(x)+C_\mu(x)
\end{align}
where $ C_\mu:X\tos X$ is defined as
\begin{align}
\label{eq:def.fmu}
 C_\mu(x):=C(x)+\mu(x-x_0) \quad \forall x \in \Dom(C).
\end{align}
Given $x_{k-1} \in X$, the following two relations describes an iteration of a variant of Tseng's MFBS algorithm
studied in \cite{mon.sva-com.siam10} 
(see also \cite{mon.sva-hpe.siam10}) for the above MIP:

\begin{align}
 \label{eq:ts04} 
   y_k&=(I+\lambda C_\mu)^{-1}
 \big(x_{k-1}-\lambda F(P_{\Omega}(x_{k-1}))\big),\\
 \label{eq:ts05}  
 x_k&=y_k-\lambda  \big(F(y_k)-F(P_{\Omega}(x_{k-1}))\big)
\end{align}
where $\lambda:=\sigma/L$.
Since by assumption B.2 we have 
$\mbox{Dom}(C)\subset \Omega\subset \mbox{Dom}(F)$,
and $\mbox{Dom}(C_\mu)=\mbox{Dom}(C)$,
it follows that $P_{\Omega}(x_{k-1})$ and $y_k$
belong to $\mbox{Dom}(F)$, 
and hence that the iteration
defined in \eqref{eq:ts04}--\eqref{eq:ts05} 
is well-defined. Moreover, the assumption that
the resolvent of $C$ is computable makes the
resolvent $(I+\lambda C_\mu)^{-1}$ also computable
since 
\[
\left (I+\lambda C_\mu \right)^{-1}x=
\left (I+ \frac{\lambda}{1+\lambda\mu}C \right)^{-1} 
\left(\frac{x+\lambda\mu x_0}{1+\lambda\mu} \right) \quad x\in X.
\]

The following proposition was essentially proved in 
\cite[Proposition 4.5]{mon.sva-com.siam10} 
with a different notation.

\begin{proposition}
\label{pr:ts-hpe}
The points $y_k$ and $x_k$ defined by \eqref{eq:ts04} and \eqref{eq:ts05} with $\lambda=\sigma/L$ and
the vector
\begin{align*}
c_k &= \dfrac{1}{\lambda}(x_{k-1}-y_k) - F(P_{\Omega}(x_{k-1})) -\mu(y_k-x_0) 
\end{align*}
satisfy
 \begin{align}
\label{eq:ts-hpe01}
c_k\in C(y_k), \quad
 \norm{\lambda [F(y_k)+ c_k + \mu(y_k-x_0)]+y_k-x_{k-1}}
 \leq \sigma\norm{y_k-x_{k-1}},
\end{align}
and hence $b_k:=F(y_k)+c_k$, $\lambda_k:=\lambda$ and $\varepsilon_k:=0$ satisfy \eqref{eq:ec.2}.
\end{proposition}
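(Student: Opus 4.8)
The plan is to verify the two assertions in \eqref{eq:ts-hpe01} directly from the defining relations \eqref{eq:ts04}--\eqref{eq:ts05}, and then to read off \eqref{eq:ec.2} as an immediate consequence.

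First I would derive the inclusion $c_k\in C(y_k)$. Applying the resolvent/inclusion equivalence (recall that $y=(I+\lambda A)^{-1}x$ if and only if $(x-y)/\lambda\in A(y)$) to the definition \eqref{eq:ts04} of $y_k$ gives
\[
\frac{1}{\lambda}\big(x_{k-1}-\lambda F(P_{\Omega}(x_{k-1}))-y_k\big)\in C_\mu(y_k).
\]
Substituting the definition \eqref{eq:def.fmu} of $C_\mu$ and isolating the $C(y_k)$-component then produces precisely the stated formula for $c_k$.

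For the norm estimate, I would plug the formula for $c_k$ into the vector $\lambda[F(y_k)+c_k+\mu(y_k-x_0)]+y_k-x_{k-1}$. A short computation shows that the $c_k$, the $\mu(y_k-x_0)$ and the $x_{k-1}$ contributions all cancel, leaving the clean identity
\[
\lambda[F(y_k)+c_k+\mu(y_k-x_0)]+y_k-x_{k-1}=\lambda\big(F(y_k)-F(P_{\Omega}(x_{k-1}))\big).
\]
Since $y_k\in\Dom(C)\subset\Omega$ (as $y_k$ lies in the domain of $C_\mu$) and $P_{\Omega}(x_{k-1})\in\Omega$, the Lipschitz hypothesis B.2 together with the choice $\lambda=\sigma/L$ bounds the norm of the right-hand side above by $\sigma\norm{y_k-P_{\Omega}(x_{k-1})}$.

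The only nonroutine step is then to pass from $P_{\Omega}(x_{k-1})$ to $x_{k-1}$. Because $y_k\in\Omega$, the obtuse-angle characterization of the projection onto the convex set $\Omega$, namely $\inner{x_{k-1}-P_{\Omega}(x_{k-1})}{y_k-P_{\Omega}(x_{k-1})}\le 0$, yields $\norm{y_k-P_{\Omega}(x_{k-1})}\le\norm{y_k-x_{k-1}}$ after expanding $\norm{y_k-x_{k-1}}^2$; this completes the second relation in \eqref{eq:ts-hpe01}. Finally, taking $b_k=F(y_k)+c_k$, $\lambda_k=\lambda$ and $\varepsilon_k=0$, the inclusion in \eqref{eq:ec.2} holds since $b_k\in(F+C)(y_k)=B(y_k)\subseteq B^{[0]}(y_k)$ by the monotonicity of $B$ and Proposition \ref{pr:teps.pr}(c), while the error inequality in \eqref{eq:ec.2} is simply the square of the estimate just established. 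The main obstacle is really only this projection inequality, whose validity hinges entirely on the domain inclusion $\Dom(C)\subset\Omega$ guaranteeing $y_k\in\Omega$.
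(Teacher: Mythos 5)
Your proof is correct, and it takes a genuinely more self-contained route than the paper's. The paper proves the inequality in \eqref{eq:ts-hpe01} simply by invoking items (a) and (c) of Proposition 4.5 of \cite{mon.sva-com.siam10} (the existing analysis of Tseng's MFBS iteration), whereas you reprove that result from scratch: the exact cancellation identity $\lambda[F(y_k)+c_k+\mu(y_k-x_0)]+y_k-x_{k-1}=\lambda\big(F(y_k)-F(P_\Omega(x_{k-1}))\big)$, the Lipschitz bound with $\lambda L=\sigma$, and the projection inequality $\norm{y_k-P_\Omega(x_{k-1})}\le\norm{y_k-x_{k-1}}$ (valid because $y_k$ lies in the range of the resolvent of $C_\mu$, hence $y_k\in\Dom(C_\mu)=\Dom(C)\subset\Omega$) are precisely the content of the cited external result, so nothing is missing. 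Your derivation of the inclusion $c_k\in C(y_k)$ via the resolvent characterization matches the paper's (one-line) argument. A minor difference at the end: you justify $b_k\in B^{[0]}(y_k)$ through Proposition \ref{pr:teps.pr}(c) using only monotonicity of $F+C$ (which follows from B.1 and B.3), while the paper cites Proposition \ref{pr:teps.pr}(d), which requires maximal monotonicity of $B$; both are legitimate, and yours is arguably the more economical hypothesis. In short, the paper's route buys brevity and alignment with the prior literature; yours buys a complete argument the reader can verify without leaving the page.
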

\begin{proof}
The inclusion in \eqref{eq:ts-hpe01} follows directly
from \eqref{eq:ts04}, \eqref{eq:def.fmu}
and the definition of $c_k$.
On the other hand, using items (a) and (c)
of \cite[Proposition 4.5]{mon.sva-com.siam10}  
(with a different notation), we obtain 
the inequality in \eqref{eq:ts-hpe01}.
The last statement of the proposition follows
from the definition of $b_k$, \eqref{eq:inc.p.2},
\eqref{eq:ts-hpe01} and 
Proposition \ref{pr:teps.pr}(d). 
\end{proof}

In the next theorem we show the iteration-complexity
of DR-HPE for solving \eqref{eq:inc.p.2} 
under the assumption that the iteration 
of the variant of Tseng's MFBS method
described in \eqref{eq:ts04}--\eqref{eq:ts05} 
is used
as an implementation of step 1 of Algorithm 2.

\begin{theorem}
\label{th:main.ts}
If
$\max\{ \sigma^{-1},(1-\sigma)^{-1}\}=\mathcal{O}(1)$, then
\emph{DR-HPE} in which step 1 of \emph{Algorithm 2} is implemented according to the
recipe described in Proposition \ref{pr:ts-hpe} terminates
with a pair 
$(y,b)$ satisfying
\begin{align}
\label{eq:ts.pc}
 b\in (F+C)(y),\quad \norm{b}\leq \bar\rho
\end{align}
in at most
\begin{align}
\label{eq:c.alg4}
 \mathcal{O}\left(\left(1+
\dfrac{Ld_0}
{\bar \rho-\rho}
\right)
\left[1+
\log^+\left(
\dfrac{Ld_0}
{\rho}\right)\right]\right)
\end{align}
iterations where $\bar \rho$ and $\rho$ are as in 
step 0 of \emph{DR-HPE}.
\end{theorem}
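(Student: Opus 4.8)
The plan is to obtain \eqref{eq:c.alg4} as a direct instantiation of the general inner-iteration bound \eqref{eq:c.alg3} of Theorem \ref{th:main}, exploiting the two special features of the Tseng-based implementation of step~1 of Algorithm~2 described in Proposition \ref{pr:ts-hpe}. The first feature is that every inner iteration uses the \emph{constant} stepsize $\lambda_k=\lambda=\sigma/L$; hence in applying Theorem \ref{th:main} one takes $\underline{\lambda}=\sigma/L$ as the lower bound on the proximal stepsizes and chooses the DR-HPE input $\bar\lambda=\sigma/L$ as well, so that $\bar\lambda/\underline\lambda=1$ and the prefactor $(1+\bar\lambda/\underline\lambda)$ appearing in \eqref{eq:c.alg3} is $\mathcal{O}(1)$. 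The second feature is that $\varepsilon_k=0$ at every inner iteration, which I would use in two distinct ways.

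First I would dispose of the qualitative conclusion. Since $\varepsilon=\varepsilon_k=0$, Proposition \ref{pr:teps.pr}(d) together with \eqref{eq:inc.p.2} gives $b_k\in B^{[0]}(y_k)=B(y_k)=(F+C)(y_k)$, so the inclusion in the target \eqref{eq:ts.pc} holds \emph{exactly}, with no $\varepsilon$-enlargement; and the bound $\norm{b}\le\bar\rho$ upon termination is precisely the second inequality in \eqref{eq:sev} of Theorem \ref{cr:c.alg2}(b). Thus the output of DR-HPE does satisfy \eqref{eq:ts.pc}, and it remains only to count iterations.

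The quantitative count is where the one genuine subtlety lies. Because $\varepsilon_k=0\le\bar\varepsilon$ at every inner iteration, the $\varepsilon$-branch $\varepsilon_k>\varepsilon$ of the termination test in step~2 of Algorithm~2 is never active. Revisiting the proof of Proposition \ref{th:c.alg2}, the inequality $\varepsilon_k>\varepsilon$ can therefore never be the reason the algorithm fails to terminate, so only the $\rho$-branch survives and the inner-iteration estimate \eqref{eq:it.cc} retains solely its $\log^+\!\left([\tfrac{1+\sigma}{1-\sigma}]d_0^2/\underline\lambda^2\rho^2\right)$ term, while the $\log^+(\cdots/\bar\varepsilon)$ term drops out of the maximum. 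This is the single place where one cannot quote \eqref{eq:c.alg3} verbatim (that bound keeps the maximum over both logarithmic terms) and must instead re-inspect the counting argument to discard the $\bar\varepsilon$ contribution; I expect this to be the main point requiring care.

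Finally I would substitute and simplify. With $\underline\lambda=\bar\lambda=\sigma/L$ one has $d_0/[\bar\lambda(\bar\rho-\rho)]=Ld_0/[\sigma(\bar\rho-\rho)]$ and $d_0/(\underline\lambda\rho)=Ld_0/(\sigma\rho)$. Using $\max\{\sigma^{-1},(1-\sigma)^{-1}\}=\mathcal{O}(1)$, the factor $\sigma^{-1}$ is absorbed into the $\mathcal{O}(\cdot)$: outside the logarithm it contributes an $\mathcal{O}(1)$ multiplicative factor, so $d_0/[\bar\lambda(\bar\rho-\rho)]=\mathcal{O}\!\left(Ld_0/(\bar\rho-\rho)\right)$; and inside $\log^+$ the subadditivity $\log^+(ab)\le\log^+(a)+\log^+(b)$ with $\log^+(\sigma^{-1})=\mathcal{O}(1)$ yields $\log^+(Ld_0/(\sigma\rho))=\mathcal{O}\!\left(1+\log^+(Ld_0/\rho)\right)$. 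Combining the $\mathcal{O}(1)$ prefactor $(1+\bar\lambda/\underline\lambda)$, these two reductions, and the removal of the $\bar\varepsilon$-term collapses \eqref{eq:c.alg3} exactly into \eqref{eq:c.alg4}, which completes the argument.
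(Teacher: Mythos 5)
Your proposal is correct and takes essentially the same route as the paper: instantiate Theorem \ref{th:main} with $\underline{\lambda}=\bar\lambda=\sigma/L$ via Proposition \ref{pr:ts-hpe}, and use $\varepsilon_k=0$ to both get the exact inclusion $b\in(F+C)(y)$ and make the bound independent of $\bar\varepsilon$. Your careful observation that one must re-inspect the counting argument of Proposition \ref{th:c.alg2} (where the branch $\varepsilon_k>\varepsilon$ never fires, so the $\bar\varepsilon$-logarithm drops from the maximum) rather than quote \eqref{eq:c.alg3} verbatim is exactly the mechanism behind the paper's terse remark that the bound is ``independent of the precision $\bar\varepsilon>0$.''
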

\begin{proof}
 The result is a direct consequence of Theorem 
\ref{th:main} and Proposition \ref{pr:ts-hpe}
where $\underline{\lambda}=
\bar \lambda=\lambda:=\sigma/L$.
We note that since by Proposition \ref{pr:ts-hpe}
we have $\varepsilon_k=0$ for all $k\geq 1$
the complexity bound on \eqref{eq:c.alg3} is 
independent of the precision $\bar\varepsilon>0$.
\end{proof}

We now make some comments about the special 
instance of DR-HPE described in Theorem \ref{th:main.ts} in light of
a previous variant of Tseng's MFBS algorithm studied in \cite{mon.sva-com.siam10} for solving MIP \eqref{eq:inc.p.2}.
First, the cost of an inner iteration of the above two methods are identical.
Second, if $\rho=\bar \rho/2$, then the complexity bound \eqref{eq:c.alg4}
reduces to
\begin{equation}
\label{eq:bd.ts}
 \mathcal{O}\left(\left(1+
\dfrac{Ld_0}
{\bar \rho}
\right)
\left[1+
\log^+\left(
\dfrac{Ld_0}
{\bar \rho}\right)\right]\right),
\end{equation}
which
improves the
pointwise iteration-complexity bound
$\mathcal{O}\left((Ld_0/\bar\rho)^2\right)$
for the variant of Tseng's MFBS
algorithm (see \cite[Theorem 4.6]{mon.sva-com.siam10}).
Third, it is proved in 
\cite[Theorem 6.2(b)]{mon.sva-hpe.siam10}
that the Tseng's MFBS variant finds an ergodic pair $(b,y)$ satisfying
$b\in (F+C)^\varepsilon(y)$, $\norm{y}\leq \bar\rho$
and $\varepsilon\leq \bar\varepsilon$
in at most    
$\mathcal{O}\left(\max\left[Ld_0/\bar\rho,
Ld_0^2/\bar\varepsilon\right]\right)$ iterations.
Note that the dependence of the latter bound on $\bar \rho$
differs from the one in \eqref{eq:bd.ts} only by a logarithmic term. Moreover, in contrast to the latter bound,
\eqref{eq:bd.ts} does not depend on $\bar \varepsilon$. Also, the error criterion 
implied by the latter ergodic result 
is weaker than the one in \eqref{eq:ts.pc}.
In summary, Theorem 4.2 establishes a pointwise iteration-complexity bound which closely approaches the latter ergodic bound
while guaranteeing at the same time an error criterion stronger than the one for the aforementioned ergodic result.

We finish this section by noting that,
if $C=\partial g$ where $g: X \to (-\infty,\infty]$ is a proper closed convex
function,  then an iteration of  Korpelevich's extragradient algorithm (see for example Section 4 of \cite{mon.sva-com.siam10})
can also be used to implement step 1 of  Algorithm 2 and, as a consequence, yields a different instance of DR-HPE.
Clearly, it is possible to derive a result for the new variant similar to Theorem \ref{th:main.ts} in which
the error criterion becomes 
$b\in (F+\partial_\varepsilon g)(y)$,
$\norm{b}\leq \bar\rho$, $\varepsilon\leq 
\bar\varepsilon$ 
and the complexity bound is given by 
\eqref{eq:c.alg3}
(and hence depends on $\varepsilon$) 
with
$\bar\lambda=\underline{\lambda}=\lambda:=\sigma/L$. 
Note that the latter error criterion, while weaker than the one in \eqref{eq:ts.pc}, 
is still stronger than the one
of the ergodic result for the Tseng's MBFS variant
(see, for instance, \cite[Corollary 5.3(b)]{mon.sva-hpe.siam10}).


\appendix

\section{Proof of Proposition \ref{pr:mmm}}
\label{sec:app01}

From now on  
$\{x_k\}$, $\{y_k\}$, $\{v_k\}$, 
$\{\lambda_k\}, \{\sigma_k\}$ and 
$\{\varepsilon_k\}$
are sequences generated by Algorithm 1.

Define, for $k\geq 1$:
\begin{align}
  \label{eq:gammak}
  \gamma_k:X\to\R,\;\;
  \gamma_k(x):=\inner{v_k}{x-y_k}-
\varepsilon_k\quad \forall x\in X
\end{align}
and 
\begin{align}
  \label{eq:muk}
	\theta_k:=\left(\dfrac{1}{2\lambda_k\mu}+\dfrac{1}{1-\sigma^2}\right)^{-1}\in (0,1).
\end{align}

\begin{proposition}
  \label{pr:b1}
Let $\gamma_k(\cdot)$ and 
$\theta_k$ be as in~\eqref{eq:gammak}
and~\eqref{eq:muk}, respectively.
For every $k\geq 1$:
\begin{itemize}
  \item[\emph{(a)}] 
  \label{it:b1} 
  $x_k=\arg\min\lambda_k
 \gamma_k(x)+\norm{x-x_{k-1}}^2/2$;
 \item[\emph{(b)}] 
\label{it:b2}
  $\min\lambda_k\gamma_k(x)+\norm{x-x_{k-1}}^2/2
     \geq(1-\sigma^2)\norm{y_k-x_{k-1}}^2/2$;
 \item[\emph{(c)}]
 \label{it:a3} 
 $\gamma_k(x^*)\leq -\mu\norm{x^*-y_k}^2$ for any
     $x^*\in(A+B)^{-1}(0)$;
 \item[\emph{(d)}] \label{it:b3} for any $x^*\in (A+B)^{-1}(0)$,
     \begin{align*}
       \norm{x^*-x_{k-1}}^2\geq 2\lambda_k\mu\norm{x^*-y_k}^2
       +(1-\sigma^2)\norm{y_k-x_{k-1}}^2+\norm{x^*-x_k}^2
     \end{align*}
     and
    \begin{align*}
       \left(1
       -\theta_k\right)
       \norm{x^*-x_{k-1}}^2\geq \norm{x^*-x_k}^2.
     \end{align*} 
   \end{itemize}
\end{proposition}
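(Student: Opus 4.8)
The plan is to treat (a)--(d) in order, since each later part uses the earlier ones. Throughout, abbreviate $\phi_k(x):=\lambda_k\gamma_k(x)+\norm{x-x_{k-1}}^2/2$ for the objective in (a). Since $\gamma_k$ is affine by \eqref{eq:gammak}, $\phi_k$ is a strongly convex quadratic whose Hessian is the identity, and $\nabla\phi_k(x)=\lambda_kv_k+(x-x_{k-1})$. For (a) I would simply set this gradient to zero, obtaining $x=x_{k-1}-\lambda_kv_k$, which is exactly the extragradient update $x_k$ from \eqref{eq:es}.

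For (b) I would evaluate $\phi_k$ at its minimizer $x_k$. Using $x_k-x_{k-1}=-\lambda_kv_k$ and the definition of $\gamma_k$, a short computation reduces $\phi_k(x_k)$ to $-\lambda_k\inner{v_k}{y_k-x_{k-1}}-\lambda_k^2\norm{v_k}^2/2-\lambda_k\varepsilon_k$. The crucial input is the HPE error bound in \eqref{eq:ec}: expanding $\norm{\lambda_kv_k+y_k-x_{k-1}}^2$ and using $\sigma_k\le\sigma$ gives an upper bound on $2\lambda_k\inner{v_k}{y_k-x_{k-1}}$ which, substituted above, produces $\phi_k(x_k)\ge(1-\sigma^2)\norm{y_k-x_{k-1}}^2/2$.

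Part (c) is where the operator structure enters. I would use $v_k\in A(y_k)+B^{[\varepsilon_k]}(y_k)$ to write $v_k=a_k+b_k$ with $a_k\in A(y_k)$ and $b_k\in B^{[\varepsilon_k]}(y_k)$, and fix $x^*\in(A+B)^{-1}(0)$ together with $a^*\in A(x^*)$, $b^*\in B(x^*)$ satisfying $a^*+b^*=0$. The $\mu$-strong monotonicity \eqref{eq:sm.c} of $A$ controls the first summand via $\inner{a_k}{x^*-y_k}\le\inner{a^*}{x^*-y_k}-\mu\norm{x^*-y_k}^2$, while the defining inequality \eqref{eq:def.eps} of the enlargement, evaluated at $(x^*,b^*)\in\Graph(B)$, controls the second via $\inner{b_k}{x^*-y_k}\le\inner{b^*}{x^*-y_k}+\varepsilon_k$. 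Adding these and using $a^*+b^*=0$ collapses the $x^*$-terms, leaving $\inner{v_k}{x^*-y_k}\le-\mu\norm{x^*-y_k}^2+\varepsilon_k$; subtracting $\varepsilon_k$ yields (c).

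Finally, for (d) I would use that the quadratic $\phi_k$ has identity Hessian and is minimized at $x_k$, so the exact identity $\phi_k(x^*)=\phi_k(x_k)+\norm{x^*-x_k}^2/2$ holds. Rearranging it and inserting the lower bound on $\phi_k(x_k)$ from (b) and the bound $\lambda_k\gamma_k(x^*)\le-\lambda_k\mu\norm{x^*-y_k}^2$ from (c) gives, after multiplying by $2$, the first displayed inequality of (d). The second inequality is the step I expect to be the real obstacle: I would rewrite the first as
\[
 \norm{x^*-x_{k-1}}^2-\norm{x^*-x_k}^2\ge 2\lambda_k\mu\norm{x^*-y_k}^2+(1-\sigma^2)\norm{y_k-x_{k-1}}^2
\]
and bound the right-hand side below by $\theta_k\norm{x^*-x_{k-1}}^2$. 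Writing $p=2\lambda_k\mu$, $q=1-\sigma^2$, $a=\norm{x^*-y_k}$ and $b=\norm{y_k-x_{k-1}}$, Cauchy--Schwarz yields $(a+b)^2\le(pa^2+qb^2)(p^{-1}+q^{-1})$, hence $pa^2+qb^2\ge\theta_k(a+b)^2$ since $\theta_k=(p^{-1}+q^{-1})^{-1}$ by \eqref{eq:muk}; the triangle inequality $a+b\ge\norm{x^*-x_{k-1}}$ then delivers the desired lower bound. Substituting gives $\norm{x^*-x_k}^2\le(1-\theta_k)\norm{x^*-x_{k-1}}^2$, completing (d).
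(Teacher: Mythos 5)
Your proposal is correct and follows essentially the same route as the paper's proof: the quadratic-with-identity-Hessian identity for (a), (b) and (d), the splitting $v_k=a_k+b_k$ with strong monotonicity of $A$ plus the enlargement inequality for (c), and the lower bound $2\lambda_k\mu a^2+(1-\sigma^2)b^2\geq\theta_k(a+b)^2$ combined with the triangle inequality for the contraction in (d). The only cosmetic difference is that where the paper cites the minimization identity $\min\{(1-\sigma^2)r^2+2\mu\lambda_k s^2\,:\,r,s\geq 0,\ r+s\geq\norm{x^*-x_{k-1}}\}=\theta_k\norm{x^*-x_{k-1}}^2$ without proof, you verify the equivalent bound explicitly via Cauchy--Schwarz.
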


\begin{proof}
  (a) This statement follows trivially from~\eqref{eq:es}
  and~\eqref{eq:gammak}.
  
  (b) Direct use of 
  \eqref{eq:es} and \eqref{eq:gammak} yields, 
  after  trivial algebraic manipulations,
 \begin{align*}
    \lambda_k\gamma_k(x_k)+
    \dfrac12\norm{x_k-x_{k-1}}^2=
    \dfrac12\left[\norm{y_k-x_{k-1}}^2
    -\left(\norm{\lambda_kv_k+y_k-x_{k-1}}^2+
   2\lambda_k\varepsilon_k\right)\right],
  \end{align*}
  which, combined with 
  item (a) and \eqref{eq:ec} proves item (b).
  
  (c) If $x^*\in (A+B)^{-1}(0)$, 
  then there exists $a^*\in A(x^*)$ such
  that $-a^*\in B(x^*)$.  
  It follows from the inclusion
  in~\eqref{eq:ec} that there exists 
  $a_k\in A(y_k)$,
  $b_k\in B^{[\varepsilon_k]}(y_k)$ 
  such that $v_k=a_k+b_k$. It follows
  from these inclusions, assumption A.1,
  and~\eqref{eq:def.eps}  that
  \[
  \inner{a^*-a_k}{x^*-y_k}\geq\mu\norm{x^*-y_k}^2,\qquad
  \inner{b_k+a^*}{y_k-x^*}\geq-\varepsilon_k\,.
  \]
  To end the proof of item (c), add these inequalities, observe that
  $a_k+b_k=v_k$, and use the definition 
\eqref{eq:gammak}.
  
  (d) 
  It follows from~\eqref{eq:gammak}, 
  (a), and (b) that, for all
  $x\in X$
  \begin{eqnarray*}
  \lambda_k\gamma_k(x)+\frac{1}{2}\|x-x_{k-1}\|^2
  &=&
  \left(\min\lambda_k\gamma_k(x)+
  \frac{1}{2}\|x-x_{k-1}\|^2\right)+
  \frac{1}{2}\|x-x_k\|^2\,\\
  &\geq& 
  \frac{1}{2}\left((1-\sigma^2)\|y_k-x_{k-1}\|^2+
  \|x-x_k\|^2\right).
  \end{eqnarray*}
  To prove the first inequality in item (d) take
  $x=x^*\in (A+B)^{-1}(0)$ in the above equation and use item (c).
  To prove the second inequality, observe that $\norm{x^*-y_k}+\norm{y_k-x_{k-1}}
  \geq \norm{x^*-x_{k-1}}$,
  \begin{align*}
    \min\{(1-\sigma^2)r^2+2\mu\lambda_ks^2\;|\; r,s\geq 0,\;
    r+s\geq\norm{x^*-x_{k-1}}\}=
    \theta_k\norm{x^*-x_{k-1}}^2
  \end{align*}
  and use the first inequality of item (d).
\end{proof}
The following Lemma follows trivially from 
the inequality in~\eqref{eq:ec}, the use of the 
triangle inequality and 
the fact that $\varepsilon_k\geq 0$.
\begin{lemma}\label{lm:1}
For $k\geq 1$:
 \[
  \left(1-\sigma_k\right)\|y_k-x_{k-1}\|\leq\|\lambda_kv_k\|\leq \left(1+\sigma_k\right)
	\|y_k-x_{k-1}\|. 
 \] 
\end{lemma}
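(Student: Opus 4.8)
The plan is to extract a one-sided bound from the squared inequality in \eqref{eq:ec} and then convert it into the desired two-sided estimate via the triangle inequality. First I would observe that, since $\lambda_k>0$ and $\varepsilon_k\ge 0$, the term $2\lambda_k\varepsilon_k$ appearing in the inequality of \eqref{eq:ec} is nonnegative and may therefore be discarded from its left-hand side. This yields
\[
\norm{\lambda_k v_k + y_k - x_{k-1}}^2 \le \sigma_k^2 \norm{y_k - x_{k-1}}^2,
\]
and taking square roots (both sides being nonnegative) gives the single key inequality $\norm{\lambda_k v_k + y_k - x_{k-1}} \le \sigma_k \norm{y_k - x_{k-1}}$.

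From here I would write the identity $\lambda_k v_k = (\lambda_k v_k + y_k - x_{k-1}) - (y_k - x_{k-1})$ and apply the triangle inequality in both of its forms. The ordinary triangle inequality produces the upper bound
\[
\norm{\lambda_k v_k} \le \norm{\lambda_k v_k + y_k - x_{k-1}} + \norm{y_k - x_{k-1}} \le (1+\sigma_k)\norm{y_k - x_{k-1}},
\]
while the reverse triangle inequality produces the lower bound
\[
\norm{\lambda_k v_k} \ge \norm{y_k - x_{k-1}} - \norm{\lambda_k v_k + y_k - x_{k-1}} \ge (1-\sigma_k)\norm{y_k - x_{k-1}}.
\]
Concatenating these two displays is precisely the asserted chain of inequalities.

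There is essentially no obstacle here, and this is why the statement is labelled as following trivially. The only two points requiring any care are that $\sigma_k\ge 0$ is needed so that squaring/square-rooting preserves the direction of the inequality, and that the combination $\lambda_k>0$ (guaranteed in step~1 of Algorithm~1) together with $\varepsilon_k\ge 0$ is what licenses dropping the $2\lambda_k\varepsilon_k$ term. Both conditions are part of the standing hypotheses governing \eqref{eq:ec}, so the conclusion is immediate.
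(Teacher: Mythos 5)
Your proof is correct and is exactly the argument the paper has in mind: the paper dispenses with this lemma in one sentence, noting it ``follows trivially from the inequality in \eqref{eq:ec}, the use of the triangle inequality and the fact that $\varepsilon_k\geq 0$,'' which is precisely the chain you spell out (drop $2\lambda_k\varepsilon_k$, take square roots, then apply the triangle inequality in both directions to $\lambda_k v_k = (\lambda_k v_k + y_k - x_{k-1}) - (y_k - x_{k-1})$). No discrepancies to report.
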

%
%

In the next proposition, 
we establish rates of convergence for the sequences 
$\{x_k\}$, $\{v_k\}$ and $\{\varepsilon_k\}$ 
generated by Algorithm 1.
	%
	
\begin{proposition}
  \label{th:pt}
Let $d_0$ denote the distance
$x_0$ to the solution set of~\eqref{eq:inc.p}
and define for every $k\geq 1$:
\begin{align}
\label{eq:def.Gamma}
 \Gamma_k:=\left[\prod_{j=1}^k\,(1-\theta_j)\right]^{1/2}.
\end{align}
 Then, for every 
 $k\geq 1$, $v_k\in A(y_k)+B^{[\varepsilon_k]}(y_k)$
 and  
\begin{align}
\label{eq:th.1}
  \norm{v_k}\leq \sqrt{\dfrac{1+\sigma}{1-\sigma}} 
   \left(\dfrac{\Gamma_{k-1}}{\lambda_{k}}\right)d_0,\quad 
\varepsilon_k\leq \frac{\sigma^2}{2(1-\sigma^2)}\left(\dfrac{\Gamma_{k-1}^2}{\lambda_k}\right)d_0^2,
\end{align}
\begin{align}
\label{eq:th.5}
\|x^*-x_k\|\leq \Gamma_k\|x^*-x_0\|
  \qquad \forall x^*\in (A+B)^{-1}(0).
\end{align}
\end{proposition}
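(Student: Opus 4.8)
The plan is to prove the three inequalities in the order \eqref{eq:th.5}, then the bound on $\norm{v_k}$, then the bound on $\varepsilon_k$, since each estimate feeds the next; the inclusion $v_k\in A(y_k)+B^{[\varepsilon_k]}(y_k)$ is immediate from the first relation in \eqref{eq:ec}, so only the inequalities require work. Throughout I would fix an arbitrary $x^*\in (A+B)^{-1}(0)$, and for the bounds involving $d_0$ specialize $x^*$ to the projection of $x_0$ onto the solution set, so that $\norm{x^*-x_0}=d_0$.

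For \eqref{eq:th.5}, I would iterate the second inequality of Proposition \ref{pr:b1}(d), namely $\norm{x^*-x_k}^2\leq (1-\theta_k)\norm{x^*-x_{k-1}}^2$. Applying this successively for indices $k,k-1,\ldots,1$ telescopes to $\norm{x^*-x_k}^2\leq \left[\prod_{j=1}^k (1-\theta_j)\right]\norm{x^*-x_0}^2$, and taking square roots gives \eqref{eq:th.5} by the definition \eqref{eq:def.Gamma} of $\Gamma_k$.

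To bound $\norm{v_k}$, I would first discard the two nonnegative summands $2\lambda_k\mu\norm{x^*-y_k}^2$ and $\norm{x^*-x_k}^2$ in the first inequality of Proposition \ref{pr:b1}(d) to obtain $(1-\sigma^2)\norm{y_k-x_{k-1}}^2\leq \norm{x^*-x_{k-1}}^2$. Combining this with \eqref{eq:th.5} applied at index $k-1$ yields $\norm{y_k-x_{k-1}}\leq \Gamma_{k-1}d_0/\sqrt{1-\sigma^2}$. The upper estimate of Lemma \ref{lm:1} then gives $\norm{v_k}\leq (1+\sigma_k)\norm{y_k-x_{k-1}}/\lambda_k$, and using $\sigma_k\leq \sigma$ together with the identity $(1+\sigma)/\sqrt{1-\sigma^2}=\sqrt{(1+\sigma)/(1-\sigma)}$ produces the first bound in \eqref{eq:th.1}. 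For $\varepsilon_k$, I would drop the nonnegative term $\norm{\lambda_k v_k+y_k-x_{k-1}}^2$ in the inequality of \eqref{eq:ec} to get $2\lambda_k\varepsilon_k\leq \sigma_k^2\norm{y_k-x_{k-1}}^2$, and then substitute the same bound on $\norm{y_k-x_{k-1}}^2$ and $\sigma_k\leq \sigma$ to reach the second bound in \eqref{eq:th.1}.

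Since every structural estimate has already been carried out in Proposition \ref{pr:b1} and Lemma \ref{lm:1}, the argument reduces to a telescoping product and routine substitutions, and I do not anticipate a genuine obstacle. The only points deserving care are the index shift (the $v_k$ and $\varepsilon_k$ bounds involve $\Gamma_{k-1}$, obtained from \eqref{eq:th.5} at step $k-1$, rather than $\Gamma_k$) and the elementary simplification of $(1+\sigma)/\sqrt{1-\sigma^2}$.
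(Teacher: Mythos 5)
Your proposal is correct and follows essentially the same route as the paper's proof: telescoping the second inequality of Proposition \ref{pr:b1}(d) to get \eqref{eq:th.5}, then bounding $\norm{y_k-x_{k-1}}$ via the first inequality of Proposition \ref{pr:b1}(d) together with \eqref{eq:th.5} at index $k-1$, and finally converting that into the $\norm{v_k}$ and $\varepsilon_k$ bounds through Lemma \ref{lm:1} and the error criterion \eqref{eq:ec}. The index shift to $\Gamma_{k-1}$ and the simplification $(1+\sigma)/\sqrt{1-\sigma^2}=\sqrt{(1+\sigma)/(1-\sigma)}$ that you flag are exactly the points the paper's argument relies on as well.
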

\begin{proof}
First note that~\eqref{eq:th.5} follows 
from
the second inequality in Proposition~\ref{pr:b1}(d) 
and~\eqref{eq:def.Gamma}.
Using the first inequality
in Proposition~\ref{pr:b1}(d)
and~\eqref{eq:th.5}, 
we conclude that, for all $x^*\in (A+B)^{-1}(0)$,
\[
(1-\sigma^2) \|y_k-x_{k-1}\|^2 \le 
\Gamma_{k-1}^2 \norm{x^*-x_0}^2\qquad \forall k\geq 1.
\]
Note now that \eqref{eq:th.1} follows
from the latter inequality and the relations
\[
\varepsilon_k \le \frac{\sigma^2 \|y_k-x_{k-1}\|^2}{2\lambda_k},
 \ \ \ \
\|v_k\| \le \frac{(1+\sigma)\|y_k-x_{k-1}\|}{\lambda_k},
\]
which are due to  \eqref{eq:ec} and the second inequality in Lemma~\ref{lm:1}.
\end{proof}

\noindent
{\bf Proof of Proposition \ref{pr:mmm}.}
The assumption $\lambda_k\geq \underline{\lambda}>0$
for every $k\geq 1$ and the fact that
 the scalar function
\[
t >0 \mapsto \left( \frac{1}{2t\mu} +
 \frac{1}{1-\sigma^2} \right)^{-1}
\]
is nondecreasing, combined with \eqref{eq:muk} 
and \eqref{eq:def.theta}, imply that $\theta_k\geq \theta$
for all $k\geq 1$. 
From  the latter inequality and \eqref{eq:def.Gamma}
we obtain $\Gamma_k \le (1-\theta)^{k/2}$ 
for every $k\geq 1$, which, in turn,
combined with Proposition~\ref{th:pt} completes 
the proof.


\def\cprime{$'$}


\end{document}